\tikzset{diagram/.style={matrix of math nodes, inner sep=0pt, row
    sep=#1, column sep=2.5em, text height=1.5ex, text depth=.25ex,
    nodes={inner sep=1ex}}}
\tikzset{diagram/.default=2.5em}
\newtheorem{thm}{Theorem}
\newtheorem{prop}[thm]{Proposition}
\newtheorem{lemma}[thm]{Lemma}
\newtheorem{cor}[thm]{Corollary}
\newtheorem*{thm*}{Theorem}
\theoremstyle{definition}
\newtheorem{defn}{Definition}
\newtheorem{ex}{Example}
\newtheorem{rmk}{Remark}
\newtheorem*{claim}{Claim}
\newtheorem*{related works}{Related Works}
\newtheorem*{question*}{Question}
\theoremstyle{definition}
\numberwithin{equation}{section}
\newcommand{\sq}{\mathrm{Sym}^{n}V(\fq)} 
\newcommand{\sv}{\mathrm{Sym}^{n}V} 
\newcommand{\sym}{\mathrm{Sym}}
\newcommand{\RR}{\mathbb{R}}
\newcommand{\QQ}{\mathbb{Q}}
\newcommand{\NN}{\mathbb{N}}
\newcommand{\A}{\mathbb{A}}
\newcommand{\ZZ}{\mathbb{Z}}
\newcommand{\E}{\mathbb{E}}
\newcommand{\s}{\sigma} 
\newcommand{\ld}{\lambda}
\newcommand{\fq}{\mathbb{F}_q}
\newcommand{\ff}{\mathbb{F}}
\newcommand{\zt}{Z(V,t)}
\newcommand{\zto}{\mathring{Z}(V,t)}
\newcommand{\zod}{\mathring{Z}(V,q^{-d})}
\newcommand{\CCC}{\mathscr{C}}
\newcommand{\PPP}{\mathscr{P}}
\newcommand{\DDD}{\mathscr{D}}
\newcommand{\pr}{\mathrm{Prob}}
\newcommand{\cvq}{\mathrm{Conf}^{n}V(\fq)}
\newcommand{\cv}{\mathrm{Conf}^{n}V}
\newcommand{\cq}{\cvq}
\newcommand{\conf}{\mathrm{Conf}}
\newcommand{\oo}{\omega}
\newcommand{\OO}{\Omega}
\newcommand{\mn}{\pi_n}
\newcommand{\mk}{\pi_k}
\newcommand{\mj}{\pi_j}
\newcommand{\ep}{\epsilon}
\newcommand{\ee}{\frac{e^{-L}L^r}{r!}}
\newcommand{\mm}{\mu_{r,L}}
\def\mul#1#2{\ensuremath{\left(\kern-.3em\left(\genfrac{}{}{0pt}{}{#1}{#2}\right)\kern-.3em\right)}}
\begin{document}

\title{Analytic number theory for 0-cycles}

\author{Weiyan Chen}
\date{\today}

\begin{abstract}
There is a well-known analogy between integers and polynomials over $\fq$, and a vast literature on analytic number theory for polynomials.  From a geometric point of view, polynomials are equivalent to effective 0-cycles on the affine line. This leads one to ask: Can the analogy between integers and polynomials be extended to 0-cycles on more general varieties? In this paper we study prime factorization of effective 0-cycles on an arbitrary connected variety $V$ over $\fq$, emphasizing the analogy between integers and 0-cycles. For example, inspired by the works of Granville and Rhoades, we prove that the prime factors of 0-cycles are typically Poisson distributed. 
\end{abstract}

\maketitle

\renewcommand{\thefootnote}{\fnsymbol{footnote}} 
\footnotetext{\emph{2010 Mathematics Subject Classification:}  11G25, 14C99, 14G15, 05A15.

\emph{Key words:} 0-cycles, varieties over finite fields, Weil conjectures, Poisson distribution.}     
\renewcommand{\thefootnote}{\arabic{footnote}} 


\section{Introduction}


Riemann observed that the classical zeta function encodes  information about how a random integer factors into primes. The goal of this paper is to present concrete examples addressing the question: What does the zeta function of a variety $V$ tell us about how a random 0-cycle on $V$ factors into ``primes''? We study prime factorization of effective 0-cycles on a connected variety $V$ over $\fq$, emphasizing the analogy between integers and 0-cycles. The key input comes from a version of the Weil conjectures for varieties not necesarily complete or nonsingular, proved by Dwork, Grothendieck, Deligne \emph{et al}.



\subsection{Prime factorization of 0-cycles}
By a ``variety'' we mean an integral scheme of finite type. Let $q$ be a power of a prime number. Fix $V$ to be a geometrically connected variety over $\fq$. We do not require $V$ to be projective or nonsingular.  An \emph{effective 0-cycle} $C$ on $V$ over $\fq$ (we will just call it a ``0-cycle'' in this paper) is a formal $\NN$-linear sum
\begin{equation}
\label{factorization}
C=n_1P_1+n_2P_2+\cdots+n_lP_l \ \ \ \ \ \ \ \ \ \ n_i\in \NN
\end{equation}
of closed points $P_i$ on $V$. The \emph{degree} of $C$ is $\deg(C):=\sum_i n_i\deg(P_i)$ where $\deg(P_i)$ denotes the degree of the closed point. 
 We view equation (\ref{factorization}) as giving the \emph{prime factorization of 0-cycle} written additively, where closed points on $V$ play the role of ``primes''. The set of all 0-cycles on $V$ of degree $n$ is precisely $\sq$, the set of $\fq$-points on the $n$-th symmetric power $\sym^nV:=V^n/S_n$. The square-free 0-cycles \emph{i.e.} those with each $n_i=1$ in (\ref{factorization}) are  the $\fq$-points on the $n$-th configuration space $\conf^n V$ (see Section \ref{set up} below).


A well-known example is when $V$ is the affine line $\A^1$. There is a natural bijection between closed points on $\A^1$ over $\fq$ of degree $n$ and monic irreducible polynomials over $\fq$ of degree $n$. This bijection extends via equation (\ref{factorization}) to be between 0-cycles on $\A^1$ over $\fq$ and monic polynomials over $\fq$. Adding two 0-cycles on $\A^1$ corresponds to multiplying two polynomials.  In this way, 0-cycles on a variety $V$ generalize monic polynomials. 

We point out that $\sq$ is not the $n$-th symmetric power of $V(\fq)$. For example, the polynomial $x^2+1$ is an element in $\sym^2\A^1(\ff_{3})$,  though none of its roots lies in $\A^1(\ff_3)$.

The analogy between integers and polynomials over $\fq$ can be extended to 0-cycles on a connected variety $V$ over $\fq$. We summarize the correspondence in Table \ref{table} below.

\begin{table}[h!]
\caption{Integers { \emph{vs.}}\ \ 0-cycles  \ \ \ \ \ \ \ \ \ \ \ \ \ }\label{table}
\begin{tabular}{|l|l|}
\hline
Positive integers $x$          & 0-cycles $C$ on a variety $V$ over $\fq$                  \\ \hline
$\log x$                        &$\deg C$ \\ \hline
Multiplication of integers                        &Formal addition of 0-cycles                                        \\ \hline
Integers in $(N,2N]$ & $\sq$                                                      \\ \hline
Square-free integers in $(N,2N]$  & $\cvq$            \\ \hline
Prime numbers                     & Closed points on $V$
\\ \hline
Prime factorization of integers   & Prime factorization of 0-cycles \\ \hline
\end{tabular}
\end{table}

Note that the degree of 0-cycles corresponds to the logarithm of positive integers because $\deg(C+D)=\deg C+\deg D$, just as for integers, $\log(x\cdot y)=\log x+\log y$.  

\subsection{Summary of results}
There has been a vast literature on analytic number theory for polynomials over $\fq$. 
 Guided by the analogy in Table \ref{table}, we explore analytic number theory for 0-cycles: we study asymptotic statistics for the prime factorization of 0-cycles on a connected variety $V$ over $\fq$. Our results, to be summarized below, are  analogs of classical results in analytic number theory, and also generalize previous works about polynomials over $\fq$. Among these results, 
 Theorem \ref{intro Poisson} is the most difficult to prove. \\
 
 First, we start with the prime number theorem, proved by Hadamard and de la Vallée-Poussin, which says that the probability for a random integer in $(N,2N]$ to be prime is 
$$\sim\frac{1}{\log N} \ \ \ \ \ \ \text{as $N\to\infty$}.$$ 
We prove a similar statement for 0-cycles: there exists a constant $c$ depending on the variety $V$ such that the probability for a random 0-cycle in $\sq$ to be irreducible is 
\begin{equation}
\label{PNT direct}
\sim\frac{c}{\log |\sq|}\ \ \ \ \ \ \text{as $n\to\infty$}.
\end{equation}
More precisely, we prove the following statement with an explicit error bound:
\begin{prop}[\bf Prime number theorem for 0-cycles]
\label{PNT}
Suppose $V$ is a geometrically connected scheme of finite type over $\fq$ with $d:=\dim V\ge 1$. Let $\zt$ be the zeta function of $V$ and let $\zto:=\zt (1-q^{d}t)$. If
$$\mn:=\bigg|\{C\in\sq:\text{$C$ is prime}\}\bigg|,$$ 
then we have
$$\frac{\mn}{|\sq|}=\frac{1}{n}\frac{1}{\zod}+O(\frac{1}{nq^{n/2}}).$$
\end{prop}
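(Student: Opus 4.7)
The plan is to estimate the numerator $\mn$ and the denominator $|\sq|$ separately using the rational form of $\zt$ coming from the Weil conjectures (Dwork--Grothendieck--Deligne), and then divide. By the Grothendieck trace formula and Deligne's weight bound for compactly supported $\ell$-adic cohomology, I would write
$$\zt = \prod_{i=0}^{2d} P_i(t)^{(-1)^{i+1}}, \qquad P_i(t):=\det\!\big(1-tF\mid H^i_c(V_{\fqbar},\QQ_\ell)\big),$$
with reciprocal roots $\alpha$ satisfying $|\alpha|\leq q^{i/2}$. Since $V$ is geometrically connected of dimension $d$, one has $H^{2d}_c\cong\QQ_\ell(-d)$ and hence $P_{2d}(t)=1-q^d t$ gives a single simple pole at $t=q^{-d}$. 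Thus $\zto=(1-q^d t)\zt$ is a rational function whose remaining poles come from $P_{2i}$ with $i<d$; these all lie at $|t|\geq q^{-i}\geq q^{-(d-1)}$. In particular, $\zod$ is finite and nonzero (the pole being simple), and the Taylor coefficients $\zto=\sum_j c_j t^j$ satisfy $|c_j|=O(j^M q^{j(d-1)})$ for some fixed $M$ by partial fractions.

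For the denominator, I would combine $\zt=\sum_n |\sq|\,t^n$ with $\zt=\zto/(1-q^d t)$ and match the coefficient of $t^n$ to get
$$|\sq|=\sum_{j=0}^n c_j\, q^{d(n-j)}=q^{dn}\zod - q^{dn}\sum_{j>n} c_j q^{-dj}.$$
The tail is bounded by a convergent geometric-type series with ratio $q^{-1}$, yielding $|\sq|=q^{dn}\,\zod + O(q^{n(d-1)})$.

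For the numerator, Möbius inversion applied to the standard identity $|V(\mathbb{F}_{q^m})|=\sum_{d'\mid m} d'\,\pi_{d'}$ gives $n\mn=\sum_{d'\mid n}\mu(n/d')\,|V(\mathbb{F}_{q^{d'}})|$, and Deligne's weight bound yields $|V(\mathbb{F}_{q^m})|=q^{md}+O(q^{m(d-1/2)})$ with implicit constant depending only on the Betti numbers of $V$. The term $d'=n$ contributes the main $q^{nd}$; the contribution of proper divisors $d'\leq n/2$ is bounded by $\sum_{d'\leq n/2}(q^{d'd}+q^{d'(d-1/2)})=O(q^{n(d-1/2)})$, using $d\geq 1$ to dominate the geometric sum. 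Hence $n\mn=q^{nd}+O(q^{n(d-1/2)})$.

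Finally, dividing via $\mn/|\sq|=(q^{nd}/n)(1+O(q^{-n/2}))/\bigl(q^{dn}\zod(1+O(q^{-n}))\bigr)$ gives $\mn/|\sq|=1/(n\zod)+O(1/(nq^{n/2}))$, as claimed. The only delicate ingredient is the invocation of Deligne's weight bound for compactly supported cohomology in the non-proper, non-smooth setting, which is precisely the version of the Weil conjectures flagged in the introduction; the rest is bookkeeping.
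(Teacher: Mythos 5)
Your proposal is correct and follows essentially the same route as the paper: estimate $|\sq|$ by expanding the rational function $\zt=\zto/(1-q^dt)$ and bounding the tail via Deligne's weight bound (the paper does this via the explicit partial-fraction split $\zt=\zod/(1-q^dt)+(\zto-\zod)/(1-q^dt)$, which is the same computation), estimate $\mn$ by M\"obius inversion together with $|V(\ff_{q^m})|=q^{md}+O(q^{m(d-1/2)})$, and then divide. The only small imprecision is that your tail bound for the coefficients of $\zto$ should carry a polynomial factor $n^M$ (the paper writes $O(n^b/q^n)$ rather than $O(q^{-n})$), but since this is dominated by $q^{-n/2}$ it does not affect the stated error term.
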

The Weil conjectures guarantee $\zod$ to be a positive number. See Section \ref{set up} below. Proposition \ref{PNT} in the case $V=\A^1$ is a classical; it was first proved by Gauss. \\

Proposition \ref{PNT} gives the asymptotic probability for a 0-cycle to be prime. We further ask:
What is the probability for a  0-cycle to be ``nearly prime'' \emph{i.e.} to factor into a  product of large primes? and to be ``highly composite'', \emph{i.e.} to factor into a  product of small primes? The asymptotic answer to the two questions will be described by the following two functions $\RR_{\ge 1}\to\RR_{\ge0}$, respectively: the  \emph{Buchstab function} $\oo$  and the \emph{Dickman–de Bruijn function} $\rho$. See Section \ref{components} below for their definitions. The two functions are important because of the following classical theorems in analytic number theory:
\begin{itemize}
\item (Buchstab \cite{Buchstab})  If $\Phi(x,y)$ denotes the number of integers $\le x$ whose prime factors are all $\ge y$, then for any $u\ge1$, as $x\to\infty$
 $$\frac{\Phi(x,x^{1/u})}{x}\sim \frac{\oo(u)u}{\log x}.$$
 \item (Dickman \cite{Dickman}, with error term proved by  Ramaswami \cite{Ra}) If $\Psi(x,y)$ denotes the number of integers $\le x$ whose prime factors are all $\le y$, then for any $u\ge1$, as $x\to\infty$
$$\frac{\Psi(x,x^{1/u})}{x}=\rho(u)+O(1/\log x).$$
\end{itemize}
We prove the following analogs for 0-cycles:
\begin{prop}[\textbf{No small/large factor}]
\label{dickman}
Suppose $V$ is a geometrically connected scheme of finite type over $\fq$ with $d:=\dim V\ge 1$. Let $\Phi(x,m)$ denote the number of 0-cycles of degree $n$ on $V$ whose  prime factors all have degree $\ge m$, and $\Psi(n,m)$ denote the number of 0-cycles of degree $n$ on $V$ whose prime factors all have degree $\le m$. Then for every $ u>0$, as $n\to\infty$,
\begin{align}
\frac{\Phi(n,n/u)}{|\sq|} &\sim \frac{\oo(u)u}{n}\\
\frac{\Psi(n,n/u)}{|\sq|} &= \rho(u)+O(1/n)
\end{align}
\end{prop}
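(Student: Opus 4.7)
The plan is to exploit the Euler product factorization of the zeta function. Writing $a_k$ for the number of closed points on $V$ of degree $k$, one has
$$Z(V,t) \;=\; \prod_{k \geq 1} (1-t^k)^{-a_k},$$
and since each 0-cycle factors uniquely into closed points, the generating functions for our two counting problems are the truncations
$$\sum_{n} \Psi(n,m)\, t^n \;=\; \prod_{k \leq m}(1-t^k)^{-a_k}, \qquad \sum_{n} \Phi(n,m)\, t^n \;=\; \prod_{k \geq m}(1-t^k)^{-a_k}.$$
M\"obius inversion of $|V(\mathbb{F}_{q^k})| = \sum_{j \mid k} j\,a_j$ together with the Weil-conjecture bound $|V(\mathbb{F}_{q^k})| = q^{kd} + O(q^{k(d-1/2)})$ then gives $a_k = q^{kd}/k + O(q^{k(d-1/2)}/k)$, the exact analog (with $q$ replaced by $q^d$) of the prime polynomial theorem for $\fq[x]$.

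With these in hand, I would adapt the classical proofs of Dickman's and Buchstab's theorems for $\fq[x]$ (due to Car, Warlimont, Manstavicius, and others) to the 0-cycle setting. Taking the logarithmic derivative of the first generating function above produces the recursion
$$n\,\Psi(n,m) \;=\; \sum_{k \leq m}\sum_{j \geq 1} k\, a_k\, \Psi(n-jk,m),$$
and an analogous one for $\Phi$. Substituting the Weil estimate for $a_k$ and normalizing by $|\sq| = q^{nd}/\zod + O(q^{n(d-1/2)})$, which emerges from the proof of Proposition~\ref{PNT}, these recursions become discrete Riemann-sum approximations to the defining functional equations
$$u\rho(u) \;=\; \int_{u-1}^{u}\rho(v)\,dv, \qquad \bigl(u\omega(u)\bigr)' \;=\; \omega(u-1).$$
Induction on $n$ combined with the continuity of $\rho$ and $\omega$, as in the standard polynomial-case proofs, then delivers the claimed convergence.

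The main obstacle will be uniformly controlling the accumulated Weil error in the recursion, especially for Buchstab, where the main term $\omega(u)u/n$ itself vanishes as $n\to\infty$ and the error must decay strictly faster. The individual Weil error $O(q^{k(d-1/2)}/k)$ decays geometrically in $k$, but summing over $k$ and over the inductive steps in $n$ requires care. I expect to handle this by splitting the inner sum at roughly $k=n/\log n$: for small $k$ one uses the crude bound $\Psi(n-jk,m)\le|\sq|$ and absorbs the geometric Weil factor, while for large $k$ only a bounded number of terms $j$ contribute and the recursion can be estimated directly. Beyond this bookkeeping, no fundamentally new ingredient is required beyond what already appears in the polynomial case.
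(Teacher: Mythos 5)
Your approach is genuinely different from the paper's. The paper treats the collection of 0-cycles as an abstract \emph{decomposable combinatorial structure} $(\CCC,\PPP,\deg)$ in the sense of Flajolet--Soria, verifies the two growth hypotheses $C_n \sim R^n/n$ and $P_n \sim KR^n$ (with $R=q^d$, $K=\zod$) using the Weil-conjecture estimates already established in Lemmas~\ref{es all} and~\ref{es mn}, and then simply invokes Theorem~1.1 of \cite{BMPR} (for Buchstab) and Theorem~1 of \cite{OPRW} (for Dickman--de~Bruijn) as black boxes. You instead propose to rederive those combinatorial results from scratch in this particular instance, via the logarithmic-derivative recursion $n\Psi(n,m)=\sum_{k\le m}\sum_{j\ge 1}ka_k\Psi(n-jk,m)$ and its analog for $\Phi$, followed by Riemann-sum approximation to the defining integral equations for $\rho$ and $\omega$. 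Both routes rest on exactly the same arithmetic input --- the estimate $a_k = q^{kd}/k + O(q^{k(d-1/2)}/k)$ coming from rationality plus the Riemann hypothesis over finite fields --- so the substance of what you must prove about $V$ is identical; what differs is whether the combinatorial machinery is cited or reproved. The cost of your route is that you are essentially reproducing the content of \cite{BMPR} and \cite{OPRW}: the Buchstab case in particular (where the main term $\omega(u)u/n$ itself tends to $0$) is delicate enough that Bender et al.\ devote a paper to it, and the error control you flag as the ``main obstacle'' is genuinely the hard part. Your splitting at $k\approx n/\log n$ is a plausible starting point, but I would encourage you to recognize that an axiomatic theorem already covering exactly this situation exists in the literature, so that the cleanest write-up is to isolate the 0-cycle-specific content (verification of the hypotheses via the zeta function) and cite the general combinatorial result, which is what the paper does.
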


Proposition \ref{dickman} in the case when $V=\A^1$ was proved in \cite{BMPR} and \cite{OPRW}. \\

Next, we ask: Fix a prime $P$, what is the probability for a 0-cycle to be divisible by $P$? More generally, how does the order of $P$ \emph{i.e.} the coefficients in (\ref{factorization}) in a random 0-cycle distribute? We first consider the same questions for integers. For a prime number $p$, the $p$-adic order $\nu_p$ (\emph{i.e.} the order of $p$ in the prime factorization) can be viewed as a random variable on $(N,2N]$. A straightforward calculation gives the asymptotic distribution of $\nu_p$ on (square-free) integers in $(N,2N]$ as $N\to\infty$:
\begin{enumerate}
\item $\pr(\nu_p(m)=j:m\in (N,2N])\longrightarrow p^{-j}(1-p^{-1}),$ geometric distribution.
\item $\pr(\nu_p(m)=1:\text{square-free }m\in (N,2N])\longrightarrow (p+1)^{-1}$, Bernoulli distribution.
\item\label{3} If $p$ and $l$ are different primes, then $\nu_p$ and $\nu_l$ on $(N,2N]$, or on square-free integers in $(N,2N]$, tend to be independent. 
\end{enumerate}
We prove the corresponding statements for 0-cycles.
\begin{prop}[\bf Asymptotic distribution of prime orders in 0-cycles]
\label{rv}
Suppose $V$ is a geometrically connected scheme of finite type over $\fq$ with $d:=\dim V\ge 1$.  There exists a constant $b$ depending only on $V$ satisfying the following. For a closed point $P$ on $V$ and a 0-cycle $C$,  define $\nu_P(C)$ to be the order of $P$  in the prime factorization of $C$ as in (\ref{factorization}). Let $k:=\deg(P)$. 
\begin{enumerate}
\item[$(1^\prime)$] The random variable $\nu_P$ on $\sq$ converges to the geometric distribution as $n\to\infty$. Precisely, for any natural number $j$, we have the following estimate for probability:
$$\pr(\nu_P(C)=j:C\in\sq)=\bigg(\frac{1}{q^{dk}}\bigg)^j\bigg(1-\frac{1}{q^{dk}}\bigg)\bigg(1+O(\frac{n^b}{q^n})\bigg).$$
\item[$(2^\prime)$] The random variable $\nu_P$ restricted to $\cvq$ converges to the Bernoulli distribution as $n\to\infty$. Precisely, the probability
$$\pr(\nu_P(C)=1:C\in\cvq)=\frac{1}{q^{dk}+1}\bigg(1+O(\frac{n^b}{q^n})\bigg).$$
\item[$(3^\prime)$] If $P$ and $Q$ are different closed points, then $\nu_P$ and $\nu_Q$ on $\sq$ or on $\cvq$ tend to be independent random variables as $n\to\infty$. Precisely, 
\begin{align*}
\pr (\nu_P= i \text{ and } \nu_Q= j)=\pr(\nu_P= i)\cdot\pr(\nu_Q= j)\cdot \bigg(1+O(\frac{n^b}{q^n})\bigg),
\end{align*}
where the probability is taken over either $\sq$ or $\cq$.
\end{enumerate}
\end{prop}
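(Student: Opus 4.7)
The plan is to recast each probability as a ratio of coefficients of explicit generating series built from $\zt$, and then apply the Weil/Deligne asymptotics (the same analytic engine behind Proposition~\ref{PNT}) to extract the main term with an exponentially small relative error. Starting from the Euler product $\zt = \prod_Q (1 - t^{\deg Q})^{-1}$, removing the factor for a closed point $P$ of degree $k$ yields
$$\zt\,(1 - t^k) \;=\; \sum_{n \ge 0} \bigl|\{C \in \sq : \nu_P(C) = 0\}\bigr|\, t^n,$$
and removing factors for two distinct points gives $\zt(1 - t^{\deg P})(1 - t^{\deg Q})$. For the square-free setting the analogous device is $W(t) := \zt / Z(V, t^2) = \sum_n |\cvq|\, t^n$, with avoidance encoded by dividing out $(1 + t^{\deg P})$.

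The translation of events into coefficients is then immediate: the shift $C \mapsto C - jP$ puts $\{C \in \sq : \nu_P(C) = j\}$ in bijection with 0-cycles of degree $n - jk$ avoiding $P$, so
$$\bigl|\{C \in \sq : \nu_P(C) = j\}\bigr| \;=\; [t^{n - jk}]\, \zt\,(1 - t^k),$$
and similarly for $\cvq$ and for joint events involving $P$ and $Q$.

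The main analytic input is a uniform asymptotic for such coefficients. By Deligne's mixedness theorem, $\zto$ is analytic in a disc of radius strictly larger than $q^{-d}$, with coefficient growth bounded by $C\,m^b\,q^{m(d-1/2)}$ for some $b$ depending only on $V$, and $\zod > 0$. Hence for any polynomial $R(t)$,
$$[t^m]\, R(t)\,\zt \;=\; q^{dm}\, R(q^{-d})\, \zod \;+\; O\!\bigl(m^b\, q^{m(d - 1/2)}\bigr),$$
so the ratio of any two such coefficients carries a relative error of the shape asserted in the proposition. Applying this with $R = 1$, $R = 1 - t^k$, and $R = (1 - t^{\deg P})(1 - t^{\deg Q})$ produces all the numerators and denominators needed; for the square-free parts one uses that $Z(V, t^2)$ is regular at $t = q^{-d}$ (its poles lie at $t = \pm q^{-d/2}$), so $W(t)(1 - q^d t)$ admits the same type of expansion. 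Dividing numerator by denominator cancels the factor $\zod$ (respectively $\zod / Z(V, q^{-2d})$), leaving the closed-form products $q^{-djk}(1 - q^{-dk})$ and $1/(q^{dk}+1)$; the independence in $(3')$ follows mechanically from the factorization of the numerator as $(1 - t^{k_P})(1 - t^{k_Q})$.

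The main obstacle is concentrated entirely in the analytic estimate above: everything else is bookkeeping. One must choose the constant $b$ uniformly in $P$, $Q$, $i$, and $j$, which is automatic because the Deligne bound on the coefficients of $\zto$ is intrinsic to $V$, and one must verify that multiplying or dividing the generating function by polynomials of bounded degree does not inflate the error beyond the stated rate.
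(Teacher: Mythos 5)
Your approach is essentially the same as the paper's: both reduce each probability to a ratio of Taylor coefficients of rational functions built from the zeta function, with the Weil/Deligne asymptotics supplying the dominant pole at $t = q^{-d}$ and an exponentially smaller remainder. The paper expresses these manipulations at the level of sets — a bijection $\{\nu_P \ge j\} \leftrightarrow \mathrm{Sym}^{n-jk}V(\fq)$ for the symmetric power, and an iterated inclusion–exclusion identity $|\conf^n_x| = |\conf^{n-k}| - |\conf^{n-k}_x|$ for the configuration space — while you encode the same combinatorics as multiplying $\zt$ by $(1-t^k)$ or dividing $W(t)$ by $(1+t^k)$; these are precisely the generating-function shadows of those bijections, so you are doing the same computation with cleaner bookkeeping. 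One small quantitative point: you bound the coefficients of $\zto$ by $O(m^b q^{m(d-1/2)})$, which corresponds to a singularity on the circle $|t| = q^{-(d-1/2)}$, but the next pole of $\zt$ after $q^{-d}$ actually has norm at least $q^{-(d-1)}$ (the zeros of $P_{2d-1}$ at $q^{-(2d-1)/2}$ lie in the numerator, not the denominator), so the sharper bound $O(m^b q^{m(d-1)})$ holds and is what gives the relative error $O(n^b/q^n)$ stated in the proposition rather than the weaker $O(n^b/q^{n/2})$ your version would produce. The rest of your argument — uniformity of $b$, factorization of the numerator for independence, regularity of $Z(V, t^2)$ at $q^{-d}$ — is correct.
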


Proposition \ref{rv} in the case $V=\A^1$ was proved by Arratia-Barbour-Tavare (Theorem 3.1 in \cite{ABT}). Proposition \ref{rv} in the case when $V$ is the affine or the projective space can also be deduced from a theorem of Poonen (Theorem 1.1 in \cite{Po}). 
%
Recently, Farb-Wolfson and the author independently generalized a theorem of Church-Ellenberg-Farb \cite{CEF} about asymptotic arithmetic statistics on $\conf^n\A^1$ to that on $\cv$ for $V$ a smooth variety (see Theorem B in \cite{FW} and Corollary 4 in \cite{me}). Proposition \ref{rv} gives a probabilistic interpretation and a new proof of this generalization, and removes the assumption for $V$ to be smooth.
See Section \ref{stats} below for more details.

\begin{rmk}[\bf Erd\H{o}s-Kac's heuristic]
\label{heu}
Erd\H{o}s and Kac made the following brilliant observation. Let $\OO(m)$ denote the total number of prime factors of an integer $m\in(N,2N]$, counted with multiplicities. Then $\OO(m)=\sum_p \nu_p(m)$, summing over all prime numbers $p$. Since $\nu_p$ tends to independent random variables as $N\to\infty$ by the statement (\ref{3}) above,  heuristically $\OO$ is a sum of independent random variable  in the limit. By the Central Limit Theorem, one should expect  $\OO$ to tend to the normal distribution as $N\to\infty$. This observation leads Erd\H{o}s and Kac to prove their celebrated theorem in \cite{EK} 
  which roughly says that when $N$ is large, $\OO$ is closed to be normally distributed with mean and variance  $\sim\log(\log N)$.\footnote{Erd\H{o}s-Kac theorem was originally about $\omega$, the number of distinct prime factors. But the same heuristic applies and the same result holds for $\OO$.}
  
  By Proposition \ref{rv}, the exact same heuristic applies to 0-cycles. For $C$ a 0-cycle, define $\OO(C)$ to be the number of factors in the prime factorization of $C$ counted with multiplicities.  Then
$$\OO(C)=\sum_{P\in V^{cl}}\nu_P(C),$$
where $V^{cl}$ is the set of closed points of $V$. By Porposition \ref{rv}, the sequence $(\nu_P)_{P\in V^{cl}}$ converges to a sequence of independent random variables as $n\to\infty$. As before, one should expect that when $n\to\infty$, the distribution of $\OO$ on $\sq$ would be close to normal with mean and variance $\sim\log n$ (recall that $n$ corresponds to $\log N$ in Table \ref{table}). This heuristic is confirmed by a theorem of  Liu (see Corollary 2 in \cite{Liu}).\\ 
\end{rmk}

Erd\H{o}s-Kac's heuristic and Liu's theorem imply that a 0-cycle $C$ is expected to contain $\sim\log \deg C$ many prime factors. Thus, for $C$ with prime factorization $C=n_1 P_1+\cdots n_lP_l$,
$$\phi(C):=\Big\{\log\deg P_1,\ \log \deg P_2,\ \cdots,\ \log\deg P_l\Big\}$$
is typically a collection of $\sim\log \deg C$ many numbers in the interval $[0,\log\deg C]$. How should we expect $\phi(C)$ to distribute in $[0,\log\deg C]$?

We first consider the same question for integers. Granville (Theorem 1 in \cite{G1}) proved that for almost all integers $x$, the sets of  numbers $$\phi(x):=\{\log(\log p): p|x \}$$
are close to being ``random'' \emph{i.e.} Poisson distributed. A sequence of finite sets $S_1,S_2,\cdots$ is said to be \emph{Poisson distributed} (see \cite{G1}) if there exist functions $m_j,K_j,L_j\to\infty$  monotonically as $j\to\infty$ such that $S_j\subset [0,m_j]$ and $|S_j|\sim m_j$; and for all $L$ with $L\in[1/L_j,L_j]$ and all integer $k$ with $k\in [0,K_j]$, we have 
$$\pr\bigg(t\in [0,m_j] : \Big|S_j\cap [t,t+L]\Big|=k\bigg) \sim e^{-L}\frac{L^k}{k!},$$
where $\pr$ stands for probability with respect to the Lebesgue measure. 
For example, if $S_j$ is a set of $j$ real numbers chosen uniformly and independently in the interval $[0,j]$, then the sequence $S_j$ is almost surely Poisson distributed. In this paper, we prove the following analog of Granville's theorem for 0-cycles.

%
%
%

\begin{thm}[\textbf{Prime factors in 0-cycles are Poisson distributed}]
\label{intro Poisson}
Suppose $V$ is a geometrically connected scheme of finite type over $\fq$ with $d:=\dim V\ge 1$.  As $n\to\infty$, for almost every 0-cycle $C$ in $\sq$ with prime factorization $C=n_1 P_1+\cdots n_lP_l$, the set of numbers 
\begin{equation}
\label{phi}
\phi(C):=\Big\{\log\deg P_1,\ \log \deg P_2,\ \cdots,\ \log\deg P_l\Big\}
\end{equation}
is approximately Poisson distributed. More precisely, there exist functions $K(n), L(n)\to\infty$ monotonically as $n\to\infty$ such that for all $\ep>0$, and all $n$ sufficiently large depending on $\ep$, with $\pr$ representing  probability under the Lebesgue measure, we have
$$(1-\ep)e^{-L}\frac{L^k}{k!}\le  \pr\bigg(t\in[0,\log n] : \Big|\phi(C)\cap [t,t+L]\Big|=k\bigg)\le(1+\ep)e^{-L}\frac{L^k}{k!}$$
for all $L\in[1/L(n),L(n)]$ and all integer $k\le K(n)$, and for at least $(1-\ep)|\sq|$ many 0-cycles $C$ in $\sq$. 
\end{thm}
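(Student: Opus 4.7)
The proof follows the strategy of Granville's theorem for integers, with the key inputs being the prime number theorem for 0-cycles (Proposition \ref{PNT}) and, most importantly, the asymptotic independence of the prime-order random variables $\nu_P$ established in Proposition \ref{rv}.

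First I would analyze a single window. For $K, L > 0$ with $K+L \le \log n$, define the random variable $\akl := |\phi(C) \cap [K, K+L]|$ on $\sq$, counting the distinct closed points $P$ dividing $C$ with $\log \deg P \in [K, K+L]$. Combining Proposition \ref{rv}(1'), which gives $\pr(\nu_P(C) \ge 1) = q^{-dk} + O(n^b/q^n)$ for $k = \deg P$, with the count $\sim q^{dk}/k$ of closed points of degree $k$ coming from Proposition \ref{PNT}, one computes
\[
\E_{C \in \sq}[\akl] \approx \sum_{k \in [e^K,\ e^{K+L}]} \frac{q^{dk}}{k}\cdot q^{-dk} = \sum_{k \in [e^K,\ e^{K+L}]} \frac{1}{k} = L + o(1).
\]
Proposition \ref{rv}(3') extends the same calculation to the $r$-th factorial moment, giving $\E_C[\akl(\akl-1)\cdots(\akl-r+1)] \approx L^r$. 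Matching factorial moments against Poisson$(L)$ then yields $\pr_{C\in\sq}(\akl = r) = \ee + o(1)$ for each fixed $r$.

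Next I would average over the center $K$. Fix $r$ and $L$ and set
\[
\skl(C) := \frac{1}{\log n} \cdot \mu\Big(\{K \in [0,\ \log n - L] : A_{K,L}(C) = r\}\Big),
\]
the Lebesgue-measure fraction of windows in which $C$ has exactly $r$ distinct prime factors. By Fubini, $\E_C[\skl] = \ee + o(1)$. The second moment $\E_C[\skl^2]$ expands as a double integral over $(K_1, K_2)$; when $|K_1 - K_2| > L$ the two windows involve disjoint sets of primes, and a second application of Proposition \ref{rv}(3') gives approximate independence of $A_{K_1, L}(C)$ and $A_{K_2, L}(C)$, contributing $(\ee)^2 + o(1)$. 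The diagonal band $|K_1 - K_2| \le L$ contributes at most $O(L/\log n)$. Hence $\mathrm{Var}_C(\skl) = O(L/\log n)$, and Chebyshev's inequality bounds the fraction of $C \in \sq$ with $|\skl(C) - \ee| > \ep$ by $O(L/(\ep^2 \log n))$, which supplies exactly the two-sided estimate demanded by the theorem.

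Finally, to obtain the statement for all $r \le K(n)$ and $L \in [1/L(n), L(n)]$ simultaneously, I would choose $K(n), L(n) \to \infty$ very slowly (for instance, both $\ll (\log n)^{1/10}$), discretize $L$ on a fine grid, union-bound over the $O(K(n) L(n)^2)$ resulting pairs, and transfer the discrete estimate to the continuous range using continuity of $\ee$ in $L$ and a sandwich argument between neighboring grid points. \textbf{The main obstacle} is keeping the errors in the single-window step uniform as $r=r(n)$ grows: Proposition \ref{rv}(3') is stated for pairs of primes with error $O(n^b/q^n)$, but the $r$-th factorial moment involves $r$-tuples of primes, so the cumulative error must be controlled as $r$ grows. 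Simultaneously, $L(n)$ must tend to infinity for the Poisson limit to be nondegenerate, yet must stay small enough that the variance bound $O(L/\log n)$ still beats the union bound. Balancing these competing constraints on the growth of $K(n)$ and $L(n)$ is the technical heart of the argument.
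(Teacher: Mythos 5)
Your proposal takes a genuinely different route from the paper. You attempt to prove the Poisson statistics \emph{directly} on $\sq$: compute factorial moments of the window-count $\akl$ from Proposition \ref{rv}, deduce pointwise Poisson convergence, then run a second-moment/Chebyshev argument on the averaged window statistic $\skl$, and finally union-bound over a grid of $(r,L)$ pairs. The paper instead proves a general \emph{comparison lemma} (Lemma \ref{comparison}): for any sequence of class functions $f_n: S_n \to [0,1]$, $\E[f_n, \cvq] \le B\sqrt{\E[f_n, S_n]}$. This reduces the entire problem to bounding a single quantity $\E[g_n^2, S_n]$ (done in Lemma \ref{bound Sn} via the cycle index of $S_n$ and the Lang--Weil estimate for $\pi_j$), after which Granville's existing theorem on permutation cycle lengths \cite{G3} is imported wholesale. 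The extension from square-free to general 0-cycles is then handled via the decomposition $\sq = \bigcup_j \conf^{n-2j}V(\fq)\times\sym^j V(\fq)$, and the exceptional set $\Sigma$ is built following Rhoades \cite{Rh}. The comparison lemma buys a lot: you never need to control high factorial moments of $\akl$, never need an $r$-tuple version of Proposition \ref{rv}(3'), and never have to redo the variance computation for $\skl$ --- all of that is already done in Granville's permutation paper and transfers for free.

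The gaps you flag in your own outline are real and, I think, more serious than the outline suggests. Proposition \ref{rv}(3') only treats pairs; promoting it to $r$-tuples (needed for the $r$-th factorial moment) with an error uniform in $r$ is not a cosmetic extension --- the constant $b$ and the hidden constants in the $O(\cdot)$ would need to be tracked through a product of $r$ factors, and the relative error must stay below $\ee \approx e^{-L}L^r/r!$, which decays super-exponentially in $r$. Your Chebyshev bound controls the \emph{absolute} deviation of $\skl$, but the theorem demands a \emph{relative} error $(1\pm\ep)\ee$; when $\ee$ is small (large $r$ or small $L$), the bound $\pr(|\skl-\ee|>\ep\ee) \ll L/(\ep^2(\ee)^2\log n)$ forces $K(n), L(n)$ to grow roughly like $\log\log n / \log\log\log n$, not like $(\log n)^{1/10}$ as you propose --- a polylog choice would break the union bound. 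Finally, your variance calculation asserts the diagonal band contributes $O(L/\log n)$ and the off-diagonal band gives asymptotic independence; both are plausible but require an argument, and carrying them out would essentially amount to reproving Granville's theorem from scratch for 0-cycles. The paper's route is not just cleaner; it avoids precisely these uniform-in-$r$ moment estimates, which is why the comparison lemma is the genuine novelty of the proof.
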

\begin{rmk}[\textbf{Related work}]
Theorem \ref{intro Poisson} in the case when $V=\A^1$ was first proved by Rhoades  (Theorem 1.3 in \cite{Rh}).  Following the theme that integers and permutations should have similar statistical behaviors, Granville proved that the cycle lengths of permutations are also typically Poisson (Theorem 1 in \cite{G3}). See Granville's excellent survey \cite{G} for more on the anatomy of integers and permutations. Our proof of Theorem \ref{intro Poisson} uses ideas from both Rhoades' and Granville's works. 
\end{rmk}

\begin{rmk}[\textbf{Proof methods}]
The  key ingredient in all the results above comes from the Weil conjectures, which are now theorems thanks to the works of Dwork, Grothendieck, Deligne \emph{et al}. In addition,  to prove Proposition \ref{dickman}, we use general results about decomposable combinatorial structures proved in \cite{BMPR} and \cite{OPRW}. To prove Theorem \ref{intro Poisson}, we establish a comparison lemma (Lemma \ref{comparison} below) relating statistics about permutations and  about 0-cycles, and then apply Granville's theorem in \cite{G3} about cycle lengths in permutations. 
\end{rmk}

\section*{Acknowledgment}
The author would like to thank Andrew Barbour, Jordan Ellenberg, Matthew Emerton, Andrew Granville, Sean Howe, Dan Petersen, and Jesse Wolfson for helpful conversations, and to  thank Sean Howe, Jeffrey Lagarias, Andrew Sutherland and an anonymous referee for suggestions on an earlier draft. The author is deeply grateful to his advisor Benson Farb, both for his kind  support of this project and for his detailed comments on an earlier draft of this paper.

\section{Symmetric powers, Zeta function, and the Weil conjectures}
\label{set up}

In this section we recall the Weil conjectures. All result presented in this section is  known. 

The \emph{$n$-th symmetric power} of a variety $V$ is the quotient
$$\sv:=V^n/S_n,$$
where the symmetric group $S_n$ acts on $V^n$ by permuting the coordinates. $\sv$ is also a variety over $\fq$ (see \cite{Mumford}, page 66). 
An $\fq$-point on $\sv$ is precisely a 0-cycle of degree $n$ on $V$ defined over $\fq$. 
Similarly, the \emph{$n$-th configuration space} is defined to be
$$\cv:=\{(x_1,\cdots,x_n)\in V^n: x_i\ne x_j,\ \forall i\ne j\}/S_n,$$
where $S_n$ also acts on permuting the coordinates. 
$\cv$ is a subvariety of $\sv$. An $\fq$-point on $\cv$ is precisely a square-free 0-cycle of degree $n$ on $V$ over $\fq$. We first recall a famous theorem of Lang-Weil.
\begin{thm}[Lang-Weil, Theorem 1 in \cite{LW}]
\label{LW}
Suppose $V$ is a subvariety of a  projective space defined over $\fq$  with $d:=\dim V$. We have 
\begin{equation}
\label{es V} \frac{|V(\ff_{q^n})|}{q^{nd}}=1+O(\frac{1}{q^{n/2}}).
\end{equation}
\end{thm}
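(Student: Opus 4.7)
The plan is to prove this via Deligne's resolution of the Weil conjectures. The main tool is the Grothendieck-Lefschetz trace formula in $\ell$-adic \'etale cohomology with compact support: for any prime $\ell$ coprime to $q$,
$$|V(\ff_{q^n})| = \sum_{i=0}^{2d} (-1)^i \tr\bigl(\F^n \mid H^i_c(V_{\fqbar},\QQ_\ell)\bigr).$$
Since $V$ is integral, hence irreducible, and geometrically connected by the standing hypothesis of the paper, $V$ is geometrically irreducible of dimension $d$. The strategy is then to isolate the contribution of the top-degree cohomology and bound everything else using Deligne's weight bounds.

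First I would identify the top cohomology. Let $U := V^{\mathrm{sm}}$ be the smooth locus and $Z := V \setminus U$. Because $V$ is irreducible, $U$ is smooth, connected, and of dimension exactly $d$, while $\dim Z < d$. The excision long exact sequence for $Z \hookrightarrow V \hookleftarrow U$ collapses in top degree (since $H^i_c(Z_{\fqbar}, \QQ_\ell) = 0$ for $i \ge 2d$), yielding an isomorphism $H^{2d}_c(V_{\fqbar},\QQ_\ell) \cong H^{2d}_c(U_{\fqbar},\QQ_\ell)$. Poincar\'e duality for the smooth connected $U$ then gives that this space is one-dimensional with Frobenius acting by multiplication by $q^d$, so it contributes exactly $q^{nd}$ to the trace formula.

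Second I would bound each of the lower-degree terms using Deligne's theorem, which guarantees that every eigenvalue of $\F$ acting on $H^i_c(V_{\fqbar},\QQ_\ell)$ has absolute value at most $q^{i/2}$. Summing over $i < 2d$,
$$\Bigl| |V(\ff_{q^n})| - q^{nd} \Bigr| \le \sum_{i=0}^{2d-1} \dim H^i_c(V_{\fqbar},\QQ_\ell) \cdot q^{ni/2} \le C \cdot q^{n(2d-1)/2},$$
for a constant $C$ depending only on $V$ (a sum of its $\ell$-adic Betti numbers, finite because $V$ is of finite type). Dividing both sides by $q^{nd}$ yields the claimed estimate $1 + O(q^{-n/2})$.

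The main obstacle is really just verifying the top cohomology computation for a possibly singular, non-complete subvariety of projective space; once this and the Weil conjectures are in hand, the argument is a few lines. If one wished to avoid Deligne's deep theorem, the original Lang-Weil approach would proceed by induction on $d$: Weil's Riemann hypothesis for curves (together with normalization to handle singular curves) supplies the base case $d = 1$, and for $d \ge 2$ one uses a Bertini-type argument to cut down by a hyperplane section to a geometrically irreducible variety of dimension $d-1$, then double-counts point-hyperplane incidences. The harder work in that route is controlling the Bertini step uniformly and ensuring geometrically irreducible hyperplane sections are plentiful enough over $\fq$; invoking Deligne obviates this.
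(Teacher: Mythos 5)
The paper does not prove Theorem~\ref{LW}; it is quoted as Theorem~1 of Lang--Weil and immediately superseded by Theorem~\ref{wc}, which the paper invokes as the actual working tool. Your route --- Grothendieck--Lefschetz trace formula plus Deligne's weight bound $|\alpha|\le q^{i/2}$ on $H^i_c$, isolating the one-dimensional top cohomology $H^{2d}_c$ on which Frobenius acts by $q^d$ --- is precisely the modern derivation the paper is implicitly relying on when it says Theorem~\ref{wc} ``will imply that (\ref{es V}) also holds,'' and it is a cleaner and more general argument than Lang--Weil's original Bertini-plus-induction approach. So the framework is right.

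There is, however, one genuine gap in the logic, and it is worth pinning down because it is a point the paper itself is somewhat loose about. You assert that because $V$ is integral (hence irreducible over $\fq$) and geometrically connected, it must be geometrically irreducible. This implication is false. Consider $V=\{y^2=ax^2\}\subset\A^2_{\ff_p}$ with $a$ a non-square: this is integral over $\ff_p$, and $V_{\fqbar}$ is two lines through the origin, hence connected but reducible. Here $|V(\ff_{p^n})|=1$ for odd $n$ and $2p^n-1$ for even $n$, so (\ref{es V}) fails; correspondingly $H^{2d}_c(V_{\fqbar},\QQ_\ell)$ is two-dimensional with Frobenius eigenvalues $\pm p$. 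The correct hypothesis, which is the one Lang--Weil actually impose (``absolutely irreducible''), is geometric irreducibility, and it must be assumed rather than deduced from geometric connectedness. Once you replace the faulty deduction with that hypothesis, the rest of your argument is sound: the excision isomorphism $H^{2d}_c(V_{\fqbar})\cong H^{2d}_c(U_{\fqbar})$ for the smooth locus $U$, Poincar\'e duality giving $H^{2d}_c(U_{\fqbar})\cong\QQ_\ell(-d)$ since $U$ is then geometrically connected, and the bound $|\,|V(\ff_{q^n})|-q^{nd}|\le C q^{n(2d-1)/2}$ from Deligne, all go through.
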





Lang-Weil assumed $V$ to be a subvariety of a projective space. The following theorem will imply that (\ref{es V}) also holds for any geometrically connected schemes of finite type.

\begin{thm}[Dwork, Grothendieck, Deligne \emph{et al}]
\label{wc}
Suppose $V$ is a geometrically connected scheme of finite type over $\fq$ with $d:=\dim V$. Let $\zt$ be the \emph{zeta function} of $V$:
\begin{equation}
\label{def zeta}
\zt:=\exp\bigg({\sum_{k=1}^\infty \frac{|V(\ff_{q^k})|}{k}t^k\bigg)}= \sum_{n=0}|\sq|t^n.
\end{equation}
\begin{enumerate}[label=(\Roman*)]
\item
\label{ra} There exist polynomials $P_i(t)$ for $i=0,\cdots, 2d$ such that 
$$\zt=\frac{P_1(t)\cdots P_{2d-1}(t)}{P_0(t)\cdots P_{2d}(t)},$$
where $P_{2d}(t)=1-q^dt$.
\item
\label{rh}  For each $i$, for each $\alpha$ such that $P_i(\alpha)=0$, there exists some $j\leq i$ such that $|\alpha|=q^{-j/2}$. It is possible that $j$ depends on $\alpha$.
\end{enumerate}
\end{thm}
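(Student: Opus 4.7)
The plan is to deduce both parts from the Grothendieck--Lefschetz trace formula together with Deligne's work on mixed $\ell$-adic sheaves, since the statement is essentially a packaging of the Weil conjectures in the generality of connected but possibly open and singular $V$. Fix a prime $\ell$ coprime to $q$, and consider the $\ell$-adic cohomology with compact support $H^i_c(V_{\overline{\fq}},\QQ_\ell)$ for $i=0,\ldots,2d$. The Grothendieck--Lefschetz trace formula gives
\[
|V(\ff_{q^k})| = \sum_{i=0}^{2d} (-1)^i \tr\bigl(F_q^k \mid H^i_c(V_{\overline{\fq}},\QQ_\ell)\bigr).
\]
Plugging this into the logarithm in the definition \eqref{def zeta} and formally summing gives
\[
\zt = \prod_{i=0}^{2d} \det\bigl(1 - t F_q \mid H^i_c(V_{\overline{\fq}},\QQ_\ell)\bigr)^{(-1)^{i+1}},
\]
which immediately exhibits $\zt$ as a rational function. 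Setting $P_i(t) := \det(1 - tF_q \mid H^i_c)$ would establish the first half of \ref{ra}.

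For the identification $P_{2d}(t) = 1 - q^d t$, I would use that $V$ is geometrically connected of dimension $d$. Poincar\'e duality for possibly open/singular varieties (via the dualizing complex) together with connectedness shows that the top compactly supported cohomology $H^{2d}_c(V_{\overline{\fq}},\QQ_\ell)$ is one-dimensional and isomorphic to $\QQ_\ell(-d)$ as a Galois module; hence Frobenius acts by multiplication by $q^d$ and $P_{2d}(t) = 1 - q^d t$. (A concrete route: choose a dense open smooth subvariety $U\subset V$, use the excision sequence to relate $H^*_c(V)$ to $H^*_c(U)$ and the lower-dimensional complement, and appeal to the smooth case where Poincar\'e duality is standard.)

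For part \ref{rh}, the input is Deligne's Weil II: for any separated scheme of finite type over $\fq$, the eigenvalues of Frobenius on $H^i_c(V_{\overline{\fq}},\QQ_\ell)$ are algebraic numbers all of whose complex absolute values are of the form $q^{j/2}$ with $j \le i$ (mixed of weight $\le i$). Translating to the reciprocal roots $\alpha$ of $P_i(t) = \det(1 - tF_q \mid H^i_c)$, this says $|\alpha| = q^{-j/2}$ for some integer $j \le i$, exactly as asserted. The possibility that different roots of a single $P_i$ have different weights $j$ is precisely the mixedness phenomenon allowed in the statement.

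Since every ingredient is standard, there is no genuine obstacle; the only subtlety is bookkeeping: one must be careful that the factors $P_i(t)$ may share common roots for different $i$, so the factorization displayed in \ref{ra} is as a rational function rather than a factorization of a polynomial. The main technical content being imported from the literature is Deligne's Weil II, which handles the non-proper, non-smooth case uniformly; Dwork's rationality theorem and Grothendieck's cohomological formalism deliver the rest. I would simply cite these results and verify the shape of $P_{2d}$ by hand as above.
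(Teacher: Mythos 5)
The paper does not supply a proof of this theorem; it cites it as known, with the explicit pointer to Deligne's Weil II (Th\'eor\`eme I (3.3.1) of \cite{D}) for part \ref{rh}, which is exactly the deduction your sketch assembles: Grothendieck--Lefschetz and rationality of the $L$-factors for \ref{ra}, an excision-plus-Poincar\'e-duality argument on a smooth dense open to pin down $P_{2d}(t)=1-q^dt$, and Weil II's weight bounds on $H^i_c$ for \ref{rh}. Your argument is correct and fills in the details the paper leaves implicit; the one hypothesis worth making explicit is separatedness of $V$ (needed for the compactly supported trace formula), which the paper tacitly assumes under the umbrella of ``scheme of finite type.''
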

When $V$ is nonsingular and projective, the equality $j=i$ will be achieved for all $\alpha$ in \ref{rh}; in this case, \ref{ra} and \ref{rh} are parts of the \emph{Weil conjectures}, orginially formulated by Weil \cite{Weil}. Part \ref{rh},  due to Deligne (see Th\'eor\`eme I (3.3.1) in \cite{D}), are often called the ``Riemann Hypothesis over finite fields''; it will be especially important for our purposes.

\section{Prime number theorem and asymptotic distribution of prime orders}
In this section we will prove Propositions   \ref{PNT} and \ref{rv} stated in the introduction.

\subsection{Proof of Proposition \ref{PNT}}

We will prove Lemma \ref{es all} and \ref{es mn}, which give estimates for $|\sq|$ and $\mn$, respectively. Taking the quotient, we obtain Proposition \ref{PNT}.  

\begin{lemma}\label{es all}
Suppose $V$ is a connected variety over $\fq$ of dimension $d$.  Define $\zto:=\zt (1-q^{d}t)$. There exists a constant $b$ depending only on $V$ such that
\begin{align}
\label{es sym} & \frac{|\sq|}{q^{nd}}=\zod +O( \frac{n^b}{q^{n}}).
\end{align}
\end{lemma}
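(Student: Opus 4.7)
The plan is to read off $|\sq|$ as the $n$-th Taylor coefficient of $Z(V,t)$ and isolate the contribution of the dominant pole, using the rational structure of the zeta function from the Weil conjectures. By the series definition in (\ref{def zeta}), $|\sq|$ is the coefficient of $t^n$ in the expansion of $Z(V,t)$ around $t = 0$. By Theorem \ref{wc}\ref{ra}, $Z(V,t) = P_1 \cdots P_{2d-1}/(P_0 \cdots P_{2d})$ with $P_{2d}(t) = 1 - q^d t$, so $Z(V,t)$ has a simple pole at $t = q^{-d}$, which makes $\zto = Z(V,t)(1 - q^d t)$ holomorphic there and $\zod$ a well-defined finite number.

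Accordingly, I would perform the partial fraction split
$$Z(V,t) = \frac{\zod}{1 - q^d t} + R(t),$$
where the numerator $\zod$ in the first term is forced by evaluating $\zto$ at $t = q^{-d}$, and $R(t)$ is a rational function whose poles are contained among the roots of $P_0 P_2 \cdots P_{2d-2}$. Next, I would apply Theorem \ref{wc}\ref{rh} to bound these remaining poles away from the origin: every root $\alpha$ of $P_i$ with $i \le 2d-2$ satisfies $|\alpha| = q^{-j/2}$ for some $j \le i \le 2d-2$, hence $|\alpha| \ge q^{-(d-1)}$. A standard partial fraction expansion of $R$ into terms of the form $c/(1 - \beta^{-1} t)^m$, combined with $[t^n](1 - \beta^{-1} t)^{-m} = O(n^{m-1} |\beta|^{-n})$, then yields $[t^n] R(t) = O(n^b q^{(d-1)n})$, where $b$ may be taken to be one less than the largest pole multiplicity of $R$.

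Combining this with the obvious identity $[t^n]\zod/(1 - q^d t) = \zod \cdot q^{dn}$ produces
$$|\sq| = \zod \cdot q^{dn} + O(n^b q^{(d-1)n}),$$
and dividing by $q^{nd}$ delivers the claimed estimate. The argument is essentially coefficient extraction from a rational function whose poles are controlled by the Riemann hypothesis over finite fields, with no serious obstacle; the only bookkeeping is to verify that $b$ can be chosen independent of $n$, which is immediate since the polynomials $P_i$ and their multiplicities depend only on $V$.
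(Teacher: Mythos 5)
Your proof is correct and takes essentially the same approach as the paper: split $Z(V,t)$ as $\zod/(1-q^dt)$ plus a remainder, use Theorem~\ref{wc}\ref{rh} to push the remainder's poles out to modulus at least $q^{-(d-1)}$, and extract Taylor coefficients. The paper cites Theorem~IV.9 of Flajolet--Sedgewick for the coefficient bound where you spell out the partial-fraction expansion directly, but this is the same argument.
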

\begin{proof}[Proof of Lemma \ref{es sym}]
Theorem \ref{wc} gives that  $\zto:=\zt(1-q^dt)$ is a rational function in $t$ with radius of convergence at least $q^{-\frac{2d-2}{2}}=q^{-(d-1)}$. In particular, $\zto$ converges at $t=q^{-d}$. We decompose  $\zt$ into partial fractions of the following form:
\begin{equation}
\label{partial}
\zt = \underbrace{\frac{\zod}{1-q^dt}}_\text{dominating term}+\underbrace{\frac{\zto-\zod}{1-q^dt}}_\text{remainder}.
\end{equation}
The dominating term in (\ref{partial}), which is a geometric series, tells that $|\sq|$ grows like $\zod q^{nd}$, contributing to the first summand in (\ref{es sym}) 
The remainder in (\ref{partial}) is a rational function in $t$ with radius of convergence at least $q^{-\frac{2d-2}{2}}=q^{-(d-1)}$. Thus, its $n$-th Taylor coefficient grows like 
$O( {n^b}{q^{n(d-1)}})$ where $b$ is the number of poles with norm $q^{-(d-1)}$ (see Theorem IV.9 in \cite{AC}), which after normalization contributes to  the error term in (\ref{es sym}).
\end{proof}

\begin{rmk}[\textbf{The constant $b$}]
From the proof, the constant $b$ is equal to the number of poles of $\zt$ with norm $q^{-(d-1)}$, which further depends on the compactly supported \'etale cohomology of $V$ in dimension $2d-2$. 
\end{rmk}

\begin{lemma}\label{es mn}
Suppose $V$ is a connected variety over $\fq$ of dimension $d$, then
$$
\frac{\mn}{q^{nd}}=\frac{1}{n} +O(\frac{1}{nq^{\frac{n}{2}}}).
$$
\end{lemma}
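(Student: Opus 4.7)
The plan is to count closed points of degree exactly $n$ via Möbius inversion, then apply the Lang--Weil bound (extended to all geometrically connected schemes of finite type via Theorem \ref{wc}) to each term and track main term versus error.

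First, I would note that $\mn$ coincides with the number of closed points of $V$ of degree exactly $n$. If $a_k$ denotes the number of closed points of degree $k$, then the classical identity
$$|V(\ff_{q^n})| = \sum_{k\mid n} k\, a_k$$
(each closed point of degree $k$ contributes $k$ rational points over any extension whose degree is a multiple of $k$) can be inverted by Möbius to give
$$n\,\mn = n\, a_n = \sum_{k\mid n}\mu(n/k)\,|V(\ff_{q^k})|.$$

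Next, I would invoke the Lang--Weil-type estimate, which for geometrically connected schemes of finite type follows from the partial-fraction analysis of $\zt$ used in the proof of Lemma \ref{es all} (i.e., from Theorem \ref{wc}): for $d\ge 1$,
$$|V(\ff_{q^m})| = q^{md} + O\bigl(q^{m(d-1/2)}\bigr).$$
Applying this to the dominant term $k=n$ gives $|V(\ff_{q^n})| = q^{nd} + O(q^{n(d-1/2)})$, which upon dividing by $n$ supplies both the main term $\tfrac{1}{n}$ of the lemma and an error of size $O(q^{-n/2}/n)$ after normalizing by $q^{nd}$.

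The remaining task is to show that the contribution from proper divisors $k\mid n$ with $k<n$ is absorbed into the same error. Here the largest such $k$ is at most $n/2$, and the Lang--Weil estimate gives $|V(\ff_{q^k})| = O(q^{kd})$ uniformly. Using the trivial bound $|\mu(n/k)|\le 1$ and summing a geometric series,
$$\Bigl|\sum_{\substack{k\mid n\\ k<n}}\mu(n/k)\,|V(\ff_{q^k})|\Bigr| \;\le\; \sum_{k=1}^{\lfloor n/2\rfloor} O(q^{kd}) \;=\; O(q^{nd/2}).$$
Since $d\ge 1$, we have $nd/2 \le n(d-1/2)$, so this is also $O(q^{n(d-1/2)})$. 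Combining the two estimates yields $n\,\mn = q^{nd} + O(q^{n(d-1/2)})$, and dividing by $n\,q^{nd}$ gives exactly the stated bound.

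I do not expect a serious obstacle here; the only delicate point is keeping track of the exponent $d-\tfrac12$ versus $d/2$ for proper divisors and observing that $d\ge 1$ is precisely what makes the divisor sum harmless. All deeper input (the Riemann Hypothesis over finite fields controlling $|V(\ff_{q^m})|$) is already packaged in Theorem \ref{wc}.
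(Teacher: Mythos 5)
Your proposal is correct and follows essentially the same route as the paper: Möbius-invert the identity $|V(\ff_{q^n})|=\sum_{k\mid n}k\,\pi_k$ and apply the Lang--Weil estimate (\ref{es V}). The paper states this argument very tersely (``This formula together with (\ref{es V}) establishes the lemma''), and you have simply made explicit the bookkeeping it leaves implicit: isolating the $k=n$ term for the main contribution and bounding the proper-divisor sum by a geometric series, using $d\ge 1$ to show it is absorbed into the $O(q^{-n/2}/n)$ error.
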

\begin{proof}[Proof of Lemma \ref{es mn}]
Since the $\ff_{q^n}$-points on $V$ are precisely the fixed points  of $\mathrm{Frob}_{q^n}$, we have:
$$|V(\ff_{q^n})| = \sum_{k|n}k \mk.$$
Applying M\"obius inversion, we obtain
\begin{equation}
\label{mobius}
\mn = \frac{1}{n}\sum_{k|n}\mu({n/k})|V(\ff_{q^k})|.
\end{equation}
This formula together with (\ref{es V}) establishes the lemma.
%
\end{proof}

\subsection{Proof of Proposition \ref{rv}}
 Let $P$ be a closed point of $V$ with degree $k$. \\

\noindent \textbf{Step (1): $\pmb{\nu_P}$ on $\pmb{\sq}$.} For any $C\in \sq$ and any natural number $j$, 
$$\nu_P(C)\ge j \iff C=\underbrace{x+\ldots+x}_{j\text{-times}}+C^\prime\ \ \ \text{for some $C^\prime \in \mathrm{Sym}^{n-jk}V(\fq)$}.$$
Therefore, we have 
\begin{align*}
\pr(\nu_P\geq j:\ \sq)&=\frac{|\mathrm{Sym}^{n-jk}V(\fq)|}{|\sq|}\\
&= \frac{1}{q^{jkd}}\cdot \bigg(1+O(\frac{n^b}{q^n})\bigg)&\text{by (\ref{es sym})}.
\end{align*}
The claim is established by taking $$\pr\Big(\nu_P= j:\ \sq\Big)=\pr\Big(\nu_P\geq j:\ \sq\Big)-\pr\Big(\nu_P\geq j-1:\ \sq\Big).$$\\

\noindent\textbf{Step (2): $\pmb{\nu_P}$ on $\pmb{\cq}$.} We first prove a lemma on the size of $|\cq|$.

\begin{lemma} \label{sf}
There exists a constant $b$ (which is the same as in Lemma \ref{es all}) such that 
\begin{align}
\label{sf1}
\frac{|\cq|}{q^{nd}}&=\frac{\zod}{Z(V,q^{-2d})} +O(\frac{n^b}{q^n}).
\end{align}
\end{lemma}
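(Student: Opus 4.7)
The plan is to mirror the proof of Lemma \ref{es all}, but applied to the generating function for square-free 0-cycles instead of for all 0-cycles.

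First, I would establish the identity
$$\sum_{n\ge 0} |\cq|\,t^n \;=\; \frac{Z(V,t)}{Z(V,t^2)},$$
which is the analog of Euler's $\sum_{n}\mu(n)^2 n^{-s}=\zeta(s)/\zeta(2s)$. Two equivalent derivations: (i) the Euler product $\prod_P (1+t^{\deg P})$ counts square-free 0-cycles by degree, and $1+t^k=(1-t^{2k})/(1-t^k)$ collapses this product to $Z(V,t)/Z(V,t^2)$; (ii) every effective 0-cycle decomposes uniquely as $C_0+2D$ with $C_0$ square-free, giving the convolution identity $Z(V,t)=Z(V,t^2)\cdot\sum_n |\cq|\,t^n$.

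Next, I would perform a partial fraction decomposition at the dominant pole $t=q^{-d}$. Writing $Z(V,t)=\zto/(1-q^d t)$ gives
$$\frac{Z(V,t)}{Z(V,t^2)} \;=\; \frac{\zto}{(1-q^d t)\,Z(V,t^2)} \;=\; \frac{A}{1-q^d t} \,+\, R(t),$$
where the residue computation yields $A=\zod/Z(V,q^{-2d})$. A preliminary check is needed that this denominator is nonzero: by the Riemann Hypothesis over finite fields (Theorem \ref{wc}\ref{rh}), every zero of any numerator polynomial $P_i$ of $Z(V,t)$ has absolute value at least $q^{-(2d-1)/2}$, which strictly exceeds $q^{-2d}$ when $d\ge 1$; hence $Z(V,q^{-2d})\ne 0$ and the residue is a well-defined positive real number.

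Finally, I would extract the $n$-th Taylor coefficient. The dominant term $A/(1-q^d t)$ contributes $A\,q^{nd}$, which after dividing by $q^{nd}$ yields the main term $\zod/Z(V,q^{-2d})$. The remainder $R(t)$ is a rational function, and its $n$-th Taylor coefficient is controlled by the radius of convergence — i.e., by the next-closest singularity of $Z(V,t)/Z(V,t^2)$ after $t=q^{-d}$. Applying the standard asymptotic estimate for rational functions (e.g., Theorem IV.9 of \cite{AC}) to $R(t)$ and normalizing by $q^{nd}$ yields the error term.

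The main obstacle is locating these secondary singularities precisely enough to match the stated error bound with the same constant $b$ as in Lemma \ref{es all}. Such singularities come from two sources: the other poles of $Z(V,t)$ (zeros of the even-indexed $P_i(t)$ with $i<d$, which by Weil lie at $|t|\ge q^{-(d-1)}$) and the zeros of $Z(V,t^2)$ (coming from zeros of the odd-indexed $P_j(t)$ under $t\mapsto t^2$, which lie at $|t|\ge q^{-(2d-1)/4}$). One must compare $q^{-(d-1)}$ and $q^{-(2d-1)/4}$ and verify that the dominant secondary singularity sits at radius at least $q^{-(d-1)}$, so that the standard coefficient estimate delivers an error of the form $O(n^b q^{n(d-1)})$; this is the step analogous to the use of Theorem IV.9 of \cite{AC} in the proof of Lemma \ref{es all}.
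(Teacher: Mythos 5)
Your proposal takes essentially the same route as the paper: derive $\sum_n |\cq|\,t^n = \zt/Z(V,t^2)$ from the Euler product, perform partial fractions at the simple pole $t=q^{-d}$, compute the residue, and bound the remainder's Taylor coefficients by its radius of convergence via Theorem IV.9 of \cite{AC}. The paper's own proof is a one-line version of exactly this ("the lemma follows by the same partial fraction method as in the proof of (\ref{es sym})"), so you have filled in the details rather than found a different argument. Two further remarks. First, you correctly flag that the justification requires locating the secondary singularities of $\zt/Z(V,t^2)$, a point the paper does not address. Second, if you carry out the comparison you propose, you will find it works cleanly only for $d\ge 2$: the candidate poles are at radius $\ge q^{-(d-1)}$ (from zeros of the even-indexed $P_i$) and $\ge q^{-(2d-1)/4}$ (from zeros of $P_j(t^2)$, $j$ odd), and $q^{-(d-1)}\le q^{-(2d-1)/4}$ precisely when $d\ge 2$. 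For $d=1$ the second family can sit at radius $q^{-1/4}<1=q^{-(d-1)}$, giving a normalized error of order $n^b q^{-3n/4}$ rather than the stated $n^b q^{-n}$. This is a minor imprecision in the lemma as stated (the paper inherits it too), but it is harmless: the only downstream use is inequality (\ref{sf2}), which needs only an error of the form $O(n^b q^{-cn})$ for some $c>0$.
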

\begin{proof}[Proof of Lemma \ref{sf}]
\begin{align*}
\sum_{n=0}^\infty |\cq|t^n&=\prod_{x\in V^{cl}}(1+t^{\deg x})=\prod_{x\in V^{cl}}\frac{1-t^{2\deg x}}{1-t^{\deg x}}=\frac{\zt}{Z(V,t^2)}.
\end{align*}
Now$\zt/Z(V,t^2)$ is the generating function for $|\cq|$. By part \ref{rh} of Theorem \ref{wc}, $\zt/Z(V,t^2)$ is rational  with $t=q^{-d}$ the unique pole with the smallest absolute value. The lemma follows by the same partial fraction method as in the proof of (\ref{es sym}).
\end{proof}
In particular, by Lemma \ref{sf}, there is a constant $m$ such that for any integers $n$ and $j$,
\begin{equation}
\label{sf2}
\bigg|\frac{|\conf^{n-j}V(\fq)|}{|\cq|}-\frac{1}{q^{jd}}\bigg|\le \frac{m}{q^{jd}}(\frac{n^b}{q^n}).
\end{equation}

Define $$\conf^n_xV(\fq):=\{C\in \cq : \nu_P(C)=1\}.$$
Observe that for any $C\in \cq$,
$$\nu_P(C)=1 \iff C=x+C^\prime\ \ \ \text{for some $C^\prime \in \conf^{n-jk}V(\fq)\setminus \conf^{n-jk}_xV(\fq)$}.$$
Counting elements give:
\begin{equation}
\label{inc}
|\conf^n_xV(\fq)|= |\conf^{n-jk}V(\fq)|- |\conf^{n-jk}_xV(\fq)|.
\end{equation}
Therefore, we have:
\begin{align*}
&\pr(\nu_P=1:\  \cq)=\frac{|\conf^n_xV(\fq)|}{|\cq|}\\
&= \frac{|\conf^{n-jk}V(\fq)|-|\conf^{n-2jk}V(\fq)|+|\conf^{n-3jk}V(\fq)|-\cdots}{|\cq|}&\text{by (\ref{inc}) iteratively}\\
&= ({q^{-jkd}-q^{-2jkd}+q^{-3jkd}-\cdots})\cdot\bigg(1+O(\frac{n^b}{q^n})\bigg)&\text{by (\ref{sf2})}\\
&=\frac{1}{1+q^{jkd}}\bigg(1+O(\frac{n^b}{q^n})\bigg).
\end{align*}\\

\noindent\textbf{Step (3): Independence.}
For any distinct closed points $P$ and $Q$ on $V$, the same calculation as above gives that for random $C$ in $\sq$ or in $\cq$,
\begin{align*}
\pr \Big(\nu_P\ge i \text{ and } \nu_Q\ge j\Big)=\pr\Big(\nu_P\ge i\Big)\cdot\pr\Big(\nu_Q\ge j\Big)\cdot \bigg(1+O(\frac{n^b}{q^n})\bigg).
\end{align*}
\qed

\subsection{Statistics weighted by character polynomials}\label{stats}

In this section we discuss a corollary of Proposition \ref{rv}. For each positive integer $k$, define $X_k:\coprod_{n=1}^\infty S_n\to\ZZ$ where $X_k(\s)$ is the number of $k$-cycles in the unique cycle decomposition of $\s\in S_n$.  A \emph{character polynomial} is a polynomial $P\in\QQ[X_1,X_2,\cdots]$. It defines a class function on $S_n$ for all $n$.  
 For every 0-cycle $C$ of degree $n$, choose a permutation $\s_C\in S_n$ unique up to conjugacy such that
\begin{equation}\label{s_C}
\forall k=1,2,\cdots n, \ \ \ \ \ \ \ \  X_k(\s_C) = \text{number of prime factors of degree $k$ in $C$}.
\end{equation}
The map $C\mapsto\s_C$ gives the following function:
\begin{equation}\label{pushforward}
\sq\to \{\text{conjugacy classes in $S_n$}\}.
\end{equation}
In particular, if $C\in\cq$, then $\s_C$ records how the Frobenius automorphism permutes the $n$ distinct points in $C$. Every character polynomial $P$ can be viewed as a random variable on $\sq$ or on $\cq$ for all $n$, via the map $C\mapsto P(\s_C)$. Church-Ellenberg-Farb used the theory of representation stability to prove that for any character polynomial $P$, the limit of the expected value of $P$ over $\conf^n\A^1(\fq)$:
$$\lim_{n\to\infty}\E[P;\conf^n\A^1(\fq)]$$
always exists [\cite{CEF}, Theorem 1]. This result was later generalized by Farb-Wolfson [ \cite{FW}, Theorem C] and by Chen [\cite{me}, Corollary 4] from $\conf^n\A^1$ to $\conf^nV$ for any smooth, connected variety $V$. The approaches in \cite{CEF} and in \cite{FW} were algebro-geometric, using the stability of \'etale cohomology of $\cv$ with twisted coefficients. The approach in \cite{me} was analytic: the limit of $\E[P;\cq]$ was calculated from the zeta function of $V$. In the following, we will give a new proof of the generalized result from a probabilistic point of view. Furthermore, the original assumption for $V$ to be smooth (as was needed in \cite{FW} and \cite{me}) is removed in this new proof.



\begin{cor}\label{joint moment}
Suppose $V$ is a geometrically connected scheme of finite type over $\fq$ with $d:=\dim V\ge 1$.   Let $\ld=(\ld_1,\cdots,\ld_l)$ be any sequence of nonnegative integers, and let
$${X\choose\ld}:=\prod_{k=1}^l{X_k\choose \ld_k},$$
considered as a random variable on $\sq$ and on $\cq$. Then the following expected values have explicit asymptotic formulas:
\begin{align}
\label{sym rv}\lim_{n\to\infty}\E\bigg[{X\choose \ld};\sq\bigg]&=\prod_{k=1}^l{\mk+\ld_k-1\choose \ld_k}\frac{1}{(q^{kd}-1)^{\ld_k}}\\
\label{conf rv}\lim_{n\to\infty}\E\bigg[{X\choose \ld};\cq\bigg]&=\prod_{k=1}^l{\mk\choose \ld_k}\frac{1}{(q^{kd}+1)^{\ld_k}}.
\end{align}
\end{cor}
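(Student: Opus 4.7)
The plan is to reduce everything to the $\nu_P$ random variables studied in Proposition \ref{rv}, using the key identity (which follows directly from (\ref{s_C}))
$$X_k(\s_C) = \sum_{P:\, \deg P = k} \nu_P(C),$$
where the sum runs over the $\mk$ closed points of $V$ of degree $k$. I then apply Vandermonde's convolution to rewrite the character polynomial as
$$\binom{X_k(\s_C)}{\ld_k} = \sum_{\mathbf{b}:\,\sum_P b_P = \ld_k}\ \prod_{P:\,\deg P = k} \binom{\nu_P(C)}{b_P},$$
summed over tuples $\mathbf{b} = (b_P)$ of nonnegative integers indexed by degree-$k$ closed points. Taking the product over the finitely many $k$'s with $\ld_k > 0$ expands $\binom{X}{\ld}(\s_C)$ as a finite sum of products of $\binom{\nu_P}{b_P}$ over finitely many closed points $P$.

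Next, I take expectations. By the approximate independence of the $\nu_P$'s in Proposition \ref{rv}(3$^\prime$) (extended from pairs to any fixed finite collection by iterating the same counting argument), the expectation of each product factors asymptotically, up to an error $O(n^b/q^n)$. The individual marginal expectations are elementary: for $C \in \sq$, since $\nu_P$ converges to a geometric distribution with parameter $1 - q^{-kd}$, summing the negative binomial series yields
$$\lim_{n \to \infty} \E\bigg[\binom{\nu_P}{b};\sq\bigg] = \frac{1}{(q^{kd} - 1)^b};$$
for $C \in \cq$, since $\nu_P$ converges to a Bernoulli variable, $\E[\binom{\nu_P}{b};\cq]$ vanishes for $b \ge 2$, equals $(q^{kd}+1)^{-1}$ for $b = 1$, and equals $1$ for $b = 0$.

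I then sum over valid $\mathbf{b}$ and collect the combinatorial factor. In the symmetric case, every $\mathbf{b}$ with $\sum_P b_P = \ld_k$ contributes the same $(q^{kd} - 1)^{-\ld_k}$, and the tuples are counted by the multiset coefficient $\binom{\mk + \ld_k - 1}{\ld_k}$. In the configuration case, only tuples with entries in $\{0,1\}$ survive, corresponding to $\ld_k$-subsets of the $\mk$ degree-$k$ closed points; there are $\binom{\mk}{\ld_k}$ of these, each contributing $(q^{kd}+1)^{-\ld_k}$. Multiplying across $k$ recovers the formulas (\ref{sym rv}) and (\ref{conf rv}).

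The main obstacle will be extending the pairwise near-independence in Proposition \ref{rv}(3$^\prime$) to joint near-independence over the finite collection of closed points that appear in the expansion. Fortunately the argument is essentially the same as in the proof of Proposition \ref{rv}: conditioning on $\nu_{P_1} = j_1,\ldots, \nu_{P_r} = j_r$ reduces to counting 0-cycles in a smaller symmetric (respectively configuration) space, to which the estimates of Lemma \ref{es all} (respectively Lemma \ref{sf}) apply. Since $\ld$ is fixed and only finitely many closed points contribute to any given product in the expansion, the accumulated error $O(n^b/q^n)$ remains negligible in the limit $n \to \infty$.
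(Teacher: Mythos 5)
Your proof is correct and follows essentially the same strategy as the paper's: both hinge on the identity $X_k(\s_C) = \sum_{P:\deg P = k}\nu_P(C)$ together with the asymptotic distributions and independence supplied by Proposition \ref{rv}. The only real difference is that you explicitly derive the limiting factorial moments via Vandermonde's convolution, whereas the paper simply cites the well-known factorial moment formulas for the negative binomial and binomial distributions; your version is more self-contained but not a structurally different argument.
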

Every character polynomial $P$ can be written as a linear combination of polynomials of the form $X\choose \ld$. Thus Corollary \ref{joint moment} implies that $\lim_{n\to\infty}\E[P;\cq]$  converges for any character polynomial $P$.

\begin{proof}
Observe that for any $C\in\sq$, we have
$$
X_k(\s_C)= \text{total number of factors in $C$ of degree $k$}=\sum_{P:\deg P=k}\nu_P(C),
$$
where the sum is over all closed points $P$ of degree $k$.  By Proposition \ref{rv},  the random variable $X_k$ on $\sq$ converges to a sum of $\mk$ many independent identically distributed (i.i.d.) geometric random variables. Therefore $X_k$ converges to the negative binomial distribution. Similarly, the random variable $X_k$ on $\cq$ converges to a sum of $\mk$ many i.i.d. Bernoulli random variables, and therefore tends to the binomial distribution. The well-known formulas for the factorial moments of the limiting negative binomial and binomial distributions give the right hand sides of  (\ref{sym rv}) and  (\ref{conf rv}), respectively.
\end{proof}


\section{0-cycles as decomposable combinatorial structures}\label{log}
In this section we prove Proposition \ref{dickman} stated in the introduction. We apply the powerful axiomatic results from the theory of decomposable combinatorical structures in  \cite{BMPR}, and \cite{OPRW}. The axioms because of the Weil conjectures.

\subsection{Abstract decomposable combinatorial structures}
\label{dcs}
Consider the following general set-up. Let $\CCC$ be a set of  combinatorial objects where each element has a nonnegative degree. Define $\PPP$ to be the set of all multisets of elements in $\CCC$, where the degree of each multiset is defined to be the sum of the degree of its elements with multiplicities. Then the triple  $(\CCC,\PPP, \text{degree})$ is called a \emph{decomposable combinatorial structure} (see introduction in \cite{FS}). Objects in $\PPP$ can be viewed as ``composites'', while objects in $\CCC$ can be viewed as ``primes''. 
Given a decomposable combinatorial structure $(\CCC,\PPP, \text{degree})$, define
\begin{align}
\nonumber\CCC_n&:=\{\text{elements in $\CCC$ of degree $n$}\},&\PPP_n&:=\{\text{elements in $\PPP$ of degree $n$}\},
\\
\nonumber C_n&:=|\CCC_n|,& P_n&:=|\PPP_n|,\\
\label{dcs definition}C(t)&:=\sum_{n=0}^\infty C_nt^n,& P(t)&:=\sum_{n=0}^\infty P_nt^n. 
\end{align}
We also consider $\DDD\subseteq \PPP$ which consists of sets (instead of multisets) of distinct elements in $\CCC$.   We define $\DDD_n$ and $D_n$ similarly. See \cite{FS} for many examples of decomposable combinatorial structures that arise naturally in different areas of mathematics. In this paper, we will focus on the following example.

\begin{ex}[\bf 0-cycles as decomposable combinatorial structures]
\label{dcsex}
For each $n$, let $\PPP_n:=\sq$ and $\CCC_n:=\{\text{closed points on $V$ of degree $n$}\}$. Then $(\CCC,\PPP,\deg)$ is a decomposable combinatorial structure. In this case, $P(t)$ is precisely the zeta function $\zt$ of the variety $V$. Moreover, $\DDD_n=\cq$. 
\end{ex}
We now apply general results of decomposable combinatorial structures to study 0-cycles.

\subsection{Proof of Proposition \ref{dickman}}
\label{components}
First, we state the definitions of the Buchstab and the Dickman–de Bruijn functions. 
\begin{defn} The \emph{Buchstab function} $\oo:\RR_{\ge 1}\to\RR_{\ge0}$ is the unique continuous function satisfying the differential equations:
\begin{align*}
&\oo(u)=1/u,&1\le u\le2,\\
&\oo(u)+u\oo^\prime(u)=\oo(u-1), & u\ge2.
\end{align*}
The \emph{Dickman–de Bruijn function} $\rho:\RR_{\ge 0}\to\RR_{\ge0}$ is the unique continuous function satisfying the differential equations:
\begin{align*}
&\rho(u)=1&0\le u\le1,\\
&u\rho^\prime(u)+\rho(u-1)=0&u\ge2.
\end{align*}

\end{defn}
Proposition \ref{dickman} will be a consequence of Lemma \ref{es all} together with the following general theorems about decomposable combinatorial structures. Given a decomposable combinatorial structure $(\CCC,\PPP, \deg)$ as in the previous section, define 
\begin{align*}
&\Phi(n,m):=|\{p\in P_n : p \text{ contains only primes of degree $\ge m$}\}|,\\
&\Psi({n,m}):=|\{p\in P_n : p \text{ contains only primes of degree $\le m$}\}|.
\end{align*}

\begin{thm}[Bender-Mashatan-Panario-Richmond, Omar-Panario-Richmond-Whitely]\label{general}
Assume there exist constants $K$ and $R$ such that as $n\to\infty$,
\begin{equation}
\label{dickman condition}
C_n \sim \frac{1}{n}R^n,\ \ \ \ \ \ \ \ \  \ P_n \sim KR^n.
\end{equation}
\begin{enumerate}
\item \label{buchstab}(Theorem 1.1 in \cite{BMPR}) For any $\ep>0$, and any $m,n$ with $\ep\le m/n\le 1$,
$$\frac{C_n}{\Phi(n,m)}\sim\frac{m}{n\oo(n/m)}$$
\item (Theorem 1 in \cite{OPRW}) For any $\ep>0$ and any $m,n$ with $\ep\le m/n\le 1$,
$$\frac{\Psi({n,m})}{P_n}=\rho(n/m)+O(1/m).$$
\end{enumerate}
\end{thm}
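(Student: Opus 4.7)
The plan is to use the generating function framework for decomposable combinatorial structures. From (\ref{dcs definition}) one has the Euler product
\begin{equation}
\label{euler}
P(t)=\prod_{k\ge 1}(1-t^k)^{-C_k},
\end{equation}
and splitting the factors above vs.\ below $k=m$ gives generating functions for $\Phi(n,m)$ and $\Psi(n,m)$:
$$\sum_{n}\Phi(n,m)t^n=\prod_{k\ge m}(1-t^k)^{-C_k},\qquad \sum_{n}\Psi(n,m)t^n=\prod_{k\le m}(1-t^k)^{-C_k}.$$
Under the hypothesis (\ref{dickman condition}), $C(t)$ behaves like $-\log(1-Rt)$ near $t=R^{-1}$ while $P(t)$ has a simple-pole-type dominant singularity there; these are the only analytic facts the proof needs.

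For part (1), I would establish a convolution identity by stripping off the prime factor of smallest degree in a composite counted by $\Phi(n,m)$: namely, $\Phi(n,m)$ decomposes as $C_n$ (the single-prime case, $\deg=n\ge m$) plus, for each $k\in[m,n-1]$, a contribution from composites whose smallest prime factor has degree $k$, which itself factors as a multiset of primes of degree $k$ times an element counted by $\Phi(\cdot,k+1)$. Dividing through by $C_n$ and rescaling via $u=n/m$, the hypothesis $C_{n-jk}/C_n\to R^{-jk}$ turns the inner multiplicity sum into a geometric factor and the outer $k$-sum into a Riemann sum that reproduces the delay differential equation $\oo(u)+u\oo'(u)=\oo(u-1)$ defining the Buchstab function. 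Together with the boundary value $u\oo(u)=1$ on $[1,2]$ (where $\Phi(n,m)=C_n$ by direct counting), uniqueness pins down the limit and yields $C_n/\Phi(n,m)\sim m/(n\oo(n/m))$.

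For part (2), I would combine the complementary recursion
$$\Psi(n,m)=\sum_{j\ge 0}\binom{C_m+j-1}{j}\Psi(n-jm,m-1),$$
derived from the identity $\sum_n\Psi(n,m)t^n=(1-t^m)^{-C_m}\sum_n\Psi(n,m-1)t^n$, with a saddle-point evaluation of the generating function for $\Psi$ near $t=R^{-1}$. Dividing by $P_n$ and rescaling via $u=n/m$, the hypothesis $P_{n-jm}/P_n\to R^{-jm}$ converts this into a discrete analog of the Dickman equation $u\rho'(u)+\rho(u-1)=0$. The initial condition $\Psi(n,m)/P_n\to 1$ as $u\to 1^+$ (when $m\ge n$ every composite is counted by $\Psi(n,m)$) together with uniqueness forces the limit to be $\rho(u)$, and the $O(1/m)$ error arises from tracking the gap between the discrete recursion and its continuous limit at each step of an induction on $m$.

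The main obstacle is uniformity: both estimates must hold for all $m,n$ with $\ep\le m/n\le 1$, and in part (2) the error must be sharp in $m$ rather than merely $o(1)$. This requires quantitative versions of the asymptotics in (\ref{dickman condition})---in particular effective error bounds for $C_n-R^n/n$ and $P_n-KR^n$ that survive the many iterations of the recursions---and carrying these through a saddle-point or inductive analysis is precisely the technical heart of \cite{BMPR} and \cite{OPRW}.
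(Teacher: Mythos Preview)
The paper does not prove this theorem at all: it is quoted verbatim from the literature (Theorem~1.1 of \cite{BMPR} and Theorem~1 of \cite{OPRW}) and used as a black box to deduce Proposition~\ref{dickman}. There is therefore no ``paper's own proof'' to compare against.

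Your sketch is a reasonable outline of the standard analytic-combinatorics approach to such results, and you correctly identify the Euler product, the truncated generating functions for $\Phi$ and $\Psi$, and the fact that the limiting recursions collapse to the Buchstab and Dickman delay differential equations. You also correctly flag the real difficulty: uniformity in the range $\ep\le m/n\le 1$ and, for part~(2), the quantitative $O(1/m)$ error. However, the sketch stops well short of an actual proof. In part~(1) the passage from the discrete convolution to the Buchstab ODE is asserted rather than carried out, and the ``Riemann sum'' step hides the nontrivial issue that the number of terms grows with $n$ while each individual error is only $o(1)$. In part~(2) you invoke a saddle-point evaluation without specifying the contour or the estimates, and the inductive control of the $O(1/m)$ error---which is the entire content of \cite{OPRW}---is left as a remark. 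Your final paragraph essentially concedes this: what you have written is a plausibility argument together with a pointer back to the very references the theorem cites, not an independent proof.
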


In our case (Example \ref{dcsex}), we have $C_n=\mn$, $P_n=|\sq|$. By Lemmas \ref{es all} and  \ref{es mn},  conditions (\ref{dickman condition}) are satisfied. Thus Theorem \ref{general} implies that for any $u\ge1$, as $n\to\infty$ we have
\begin{align*}
\frac{\Phi(n,n/u)}{|\sq|} &= \frac{\Phi(n,n/u)}{\mn}\frac{\mn}{|\sq|}\sim\frac{u\oo(u)}{n}\\
\frac{\Psi(n,n/u)}{|\sq|} &= \rho(u)+O(1/n).
\end{align*}
\qed

\section{Prime factors of 0-cycles are Poisson distributed}\label{section Poisson}
In this section we prove Theorem \ref{intro Poisson} by proving the following quantitative statement:

\begin{thm}\label{Poisson}
Suppose $V$ is a geometrically connected scheme of finite type over $\fq$ with $d:=\dim V\ge 1$.  For every $n$ large enough, let $y:=\log\log n/(\log \log \log n)^2$. There exists a small  subset $\Sigma \subset \sq$ with $|\Sigma|\leq|\sq|2^{-y/7}$ such that for every $C\in \sq\setminus \Sigma$, for every $L\in [1/y,y/20]$, 
and for every $r\le y (\log y)^{-2}$,
\begin{equation}\label{bound}
\frac{1}{\log n}\ld\bigg(\Big\{t\in[0,\log n] : \Big|\phi(C)\cap [t,t+L]\Big|=k\Big\}\bigg)=e^{-L}\frac{L^r}{r!}\bigg[1+O\Big(\frac{1}{2^{y/15}}\Big)\bigg] 
\end{equation}
where $\ld$ denotes the Lebesgue measure and $\phi(C)$ is as in (\ref{phi}).
\end{thm}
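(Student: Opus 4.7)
Following the remark after Theorem~\ref{intro Poisson}, I would reduce this to Granville's theorem on cycle lengths of random permutations (Theorem~1 in \cite{G3}), via a comparison between 0-cycle statistics and permutation statistics. The bridge is the assignment $C\mapsto\s_C$ from~(\ref{s_C}): the multiset $\phi(C)$ agrees with $\{\log k : k\text{ is a cycle length of }\s_C\}$ counted with multiplicity, so the event in~(\ref{bound}) depends only on the cycle type of $\s_C$. Thus it suffices to prove~(\ref{bound}) with $C$ replaced by a uniformly random $\s\in S_n$ (which is Granville's theorem) and then transfer the estimate via a comparison lemma.

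The core step is the comparison lemma. For a partition $\ld$ of $n$, the number of $C\in\sq$ whose cycle type equals $\ld$ is $\prod_k\binom{\mk+\ld_k-1}{\ld_k}$: one distributes $\ld_k$ units of degree-$k$ multiplicity among the $\mk$ degree-$k$ closed points. By Lemma~\ref{es mn}, $\mk=q^{kd}/k\cdot(1+o(1))$; by Lemma~\ref{es all}, $|\sq|=\zod q^{nd}(1+o(1))$. For cycle types $\ld$ satisfying $\ld_k\ll\mk$ for every $k$ (the ``typical'' regime), the approximation $\binom{\mk+\ld_k-1}{\ld_k}=\mk^{\ld_k}/\ld_k!\cdot(1+o(1))$, combined with the constraint $\sum_k k\ld_k=n$, yields
\[
\pr_{C\in\sq}(\text{cycle type}(\s_C)=\ld) = \frac{1+o(1)}{\zod}\cdot\frac{1}{z_\ld},
\]
where $z_\ld=\prod_k k^{\ld_k}\ld_k!$; the right-hand side is $(1+o(1))/\zod$ times the uniform probability of the cycle type $\ld$ on $S_n$. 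The normalizing constant $1/\zod$ is absorbed by the atypical cycle types (those with some $\ld_k$ comparable to $\mk$), which I would show carry negligible total mass on both the 0-cycle and permutation sides, using Chebyshev's inequality with the factorial-moment formulas of Corollary~\ref{joint moment} together with the Erd\H os-Kac-type concentration (Remark~\ref{heu} and \cite{Liu}). Outside this atypical set, the two probability measures agree up to a $(1+o(1))$ multiplicative factor.

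With the comparison lemma in place, I would let $\Sigma\subset\sq$ consist of (i) the preimage under $C\mapsto\s_C$ of the atypical cycle types and (ii) the preimage of Granville's exceptional set from \cite{G3}; both have measure at most $|\sq|\cdot 2^{-y/7}$ after balancing constants. For $C\not\in\Sigma$, Granville's estimate applies to $\s_C$, and the comparison lemma transfers the bound to $\phi(C)$, yielding~(\ref{bound}). The main obstacle is achieving the comparison uniformly across all $L\in[1/y,y/20]$, all $r\le y(\log y)^{-2}$, and all typical cycle types simultaneously: the Weil-conjecture error terms from Lemmas~\ref{es all}--\ref{es mn} and Proposition~\ref{rv} must be summed across many $k$ while preserving an overall error of $O(2^{-y/15})$, and the atypical-set bound must be sharp enough to match the $2^{-y/7}$ threshold. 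Once the comparison is established, Granville's theorem enters as a black box, and no further geometric input is needed beyond the asymptotic matching of factorial moments between $\sq$ and $S_n$ already provided by Corollary~\ref{joint moment}.
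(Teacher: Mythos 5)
Your high-level plan is the same as the paper's: compare the pushforward of the uniform measure on 0-cycles to the uniform measure on cycle types of $S_n$, use Granville's permutation result \cite{G3} as a black box, and build $\Sigma$ from an exceptional set \`a la Rhoades. You have also correctly identified the comparison between the two measures on cycle types as the crux of the matter. However, the specific mechanism you propose for that comparison has a genuine gap.

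You claim that for ``typical'' cycle types $\ld$ the density ratio satisfies $\pr_{C\in\sq}(\mu_C=\ld)=\big(1+o(1)\big)\zod^{-1}/z_\ld$, with the $\zod^{-1}$ absorbed by negligible atypical types. This fails, and for two reasons. First, it is internally inconsistent: if both pushforwards are probability measures that put $1-o(1)$ of their mass on a common typical set, their densities there cannot differ uniformly by a constant factor $\ne 1$. Second, and more fundamentally, the $(1+o(1))$ claim is false: the approximation $\mj\approx q^{dj}/j$ has relative error $O(q^{-j/2})$, which for small $j$ (e.g.\ $j=1,2$) is a \emph{fixed constant}, not tending to $0$ with $n$. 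A typical $\ld$ has $\ld_j\in\{0,1\}$ for small $j$, and which small $j$ appear fluctuates, so the density ratio $g_n(\ld):=\prod_j\binom{\mj}{\ld_j}j^{\ld_j}\ld_j!/q^{dj\ld_j}$ is a genuinely fluctuating random variable of constant order, not concentrated near $1$ (or near any constant) as $n\to\infty$ with $q$ fixed. This is exactly the obstruction that the paper circumvents: rather than concentration, the paper (Lemma \ref{bound Sn}) proves only that $\E[g_n^2,S_n]$ is bounded by a constant $B$ depending on $V$, which it establishes via the cycle-index recurrence. Then Lemma \ref{comparison} applies Cauchy--Schwarz to get the one-sided bound $\E[f_n,\cvq]\le B\sqrt{\E[f_n,S_n]}$ for $[0,1]$-valued class functions $f_n$. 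An $L^2$ bound with Cauchy--Schwarz is weaker than your pointwise $(1+o(1))$ matching, but it is exactly what is needed: Granville's estimate already makes $\E[|\mu_{r,L}-e^{-L}L^r/r!|,S_n]$ small, and the square root from Cauchy--Schwarz only costs a constant in the exponent. You should replace your density-matching argument with this moment bound; once that is done, your atypical-set portion of $\Sigma$ becomes unnecessary, and $\Sigma$ can be built purely from the $\mu_{r,L_j}$-deviation sets as in Step~3 of the paper. One further point you gloss over: the comparison is naturally to $\cvq$ (square-free 0-cycles), not $\sq$, and passing from $\cvq$ to $\sq$ requires the decomposition $\sq=\bigsqcup_j\conf^{n-2j}V(\fq)\times\sym^jV(\fq)$ together with the shift estimate $|\mu_{r,L}(C+D)-\mu_{r,L}(C)|\le\log\deg D/\log(N/M)$ (Lemma \ref{add}); working directly with $\binom{\mj+\ld_j-1}{\ld_j}$ as you suggest is not impossible, but you would need to rework the second-moment bound for that modified $g_n$.
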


\begin{rmk}[\textbf{Notations}]\label{notation}
Before proving Theorem \ref{Poisson}, we first remark on notations. For any permutation $\s\in S_n$, let $\mu_\s$ denote the partition of $n$ given by the cycle type of $\s$. For any 0-cycle $C\in \sq$, abbreviate $\mu_C:=\mu_{\s_C}$ where $\s_C$ was defined in  (\ref{s_C}).

Suppose $\chi$ is an arbitrary class function  of $S_n$. For any $C\in\sq$, define $\chi(C):=\chi(\s_C)$. For any partition $\mu\vdash n$, define  $\chi(\mu):=\chi(\s)$ for any $\s\in S_n$ with cycle type $\mu$.  In this way, $\chi$ can be viewed as a random variable on three different sample space: on $S_n$, on $\sq$, and on $\cq$. We will write $\E[\chi, S]$ for the expected value of $\chi$ on the space $S$. 
\end{rmk}

\begin{proof}[Proof of Theorem \ref{Poisson}]
We prove the theorem in three steps: first, we prove a general lemma comparing statistics about  0-cycles and about permutations; second, we apply the comparison lemma to a theorem of Granville about permutations, and obtain an estimate for the left hand side of (\ref{bound}); finally, we construct the set $\Sigma\subset \sq$, following an argument due to Rhoades \cite{Rh}. The key input in the whole proof is to establish Lemma \ref{comparison} in the first step. \\

\noindent \textbf{Step 1: Compare permutations and square-free 0-cycles.}
We first prove a general comparison lemma in Lemma \ref{comparison}, which roughly says that  statistics on square-free 0-cycles are bounded by the corresponding statistics on permutations. 

\begin{lemma}\label{bound Sn}
Suppose $V$ is a geometrically connected scheme of finite type over $\fq$ with $d:=\dim V\ge 1$.  For each positive integer $n$, define a class function $g_n$ of $S_n$ as follows: for any partition $\mu=1^{\mu_1}2^{\mu_2}\cdots n^{\mu_n}$ of $n$, 
$$g_n(\mu):=\prod_{j=1}^n {\mj\choose \mu_j}\frac{j^{\mu_j}\mu_j!}{q^{dj\mu_j}}.$$
Then there exists a constant $B$ depending only on $V$ such that $\E[g_n^2, S_n]\leq B$ for all $n$.
\end{lemma}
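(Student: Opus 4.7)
The plan is to bound $g_n$ pointwise by a quantity whose $S_n$-expectation admits a clean generating-function computation, and then apply singularity analysis to the resulting generating function.

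\textbf{Pointwise bound.} Set $\alpha_j := j\,\pi_j/q^{dj}$. By Lemma \ref{es mn}, which is itself a consequence of the Weil conjectures, $\alpha_j = 1 + O(q^{-j/2})$ uniformly in $j$. Rewriting $\binom{\pi_j}{\mu_j}\mu_j!$ as the descending factorial $\pi_j(\pi_j-1)\cdots(\pi_j-\mu_j+1)$ and bounding it crudely by $\pi_j^{\mu_j}$, I get
$$g_n(\mu) \;=\; \prod_j \frac{\pi_j(\pi_j-1)\cdots(\pi_j-\mu_j+1)\cdot j^{\mu_j}}{q^{dj\mu_j}} \;\le\; \prod_j \alpha_j^{\mu_j},$$
so $g_n(\mu)^2 \le \prod_j \alpha_j^{2\mu_j}$ for every partition $\mu$ of $n$.

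\textbf{Generating function and singularity analysis.} Since the number of permutations of cycle type $\mu$ is $n!/\prod_j j^{\mu_j}\mu_j!$, a standard rearrangement gives, for any sequence $(b_j)_{j\ge 1}$,
$$\sum_{n=0}^\infty \E\!\left[\prod_j b_j^{\mu_j},\,S_n\right] t^n \;=\; \exp\!\left(\sum_{j\ge 1} \frac{b_j t^j}{j}\right).$$
Taking $b_j = \alpha_j^2 = 1 + \delta_j$ with $\delta_j = O(q^{-j/2})$ and splitting off the harmonic part,
$$\sum_n \E\!\left[\prod_j \alpha_j^{2\mu_j},\,S_n\right] t^n \;=\; \frac{e^{g(t)}}{1-t}, \qquad g(t) := \sum_{j\ge 1}\frac{\delta_j t^j}{j}.$$
The series $g$ has radius of convergence at least $q^{1/2}>1$, so the right-hand side is meromorphic in the disk $|t|<q^{1/2}$ with its only singularity a simple pole at $t=1$ of residue $-e^{g(1)}$. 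Subtracting this principal part and applying Cauchy's estimate yields
$$[t^n]\,\frac{e^{g(t)}}{1-t} \;=\; e^{g(1)} + O(r^{-n})$$
for any $1<r<q^{1/2}$. In particular this coefficient is uniformly bounded in $n$, and combined with Step 1 this gives $\E[g_n^2, S_n] \le B$ for a constant $B = B(V)$, as required.

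The conceptual heart of the argument is the interplay in the generating-function step: the divergent harmonic series $\sum_j 1/j$ corresponds exactly to the pole of $1/(1-t)$ at $t=1$, while the residual tail $\sum_j \delta_j/j$ converges precisely because the Riemann hypothesis over finite fields (Theorem \ref{wc}\ref{rh}) gives $\delta_j = O(q^{-j/2})$. The step that needs the most care is the pointwise bound: the factor $\binom{\pi_j}{\mu_j}\mu_j!/q^{dj\mu_j}$ must be kept bundled so that the descending-factorial estimate produces $\pi_j^{\mu_j} j^{\mu_j}/q^{dj\mu_j}$ and collapses against the combinatorial weight into the clean expression $\alpha_j^{\mu_j}$; a less careful estimate quickly leads to a divergent product over $j$ that cannot be salvaged for fixed $n$.
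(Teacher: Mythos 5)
Your proof is correct and takes essentially the same approach as the paper: the same pointwise bound $g_n(\mu)^2 \le \prod_j (j\pi_j/q^{dj})^{2\mu_j}$ via the descending-factorial estimate, followed by the cycle-index/exponential-formula identity for the resulting $S_n$-average. The only cosmetic difference is in the final step, where the paper pushes the cycle-index recursion through an induction to obtain the explicit bound $\sum_i \mul{A}{i} q^{-i/2} \le (1-q^{-1/2})^{-A}$, whereas you invoke singularity analysis on $e^{g(t)}/(1-t)$; both hinge on the same fact that the Riemann Hypothesis over finite fields pushes all non-dominant singularities outside a disk of radius $q^{1/2}>1$.
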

\begin{proof}
\begin{align*}
\E[g_n^2, S_n] &=\frac{1}{n!}\sum_{\s\in S_n}\bigg(\prod_{j=1}^n {\mj\choose X_j(\s)}\frac{j^{X_j(\s)}X_j(\s)!}{q^{djX_j(\s)}}\bigg)^2\\
&\leq \frac{1}{n!}\sum_{\s\in S_n}\prod_{j=1}^n(\frac{j\mj}{q^{dj}})^{2X_j(\s)}\\
&\leq \frac{1}{n!}\sum_{\s\in S_n}\prod_{j=1}^n(1+A q^{-j/2})^{X_j(\s)}&\text{for some constant $A$ depending only on $V$}
\end{align*}
where the last inequality comes from Lemma \ref{es mn}. We will prove the following claim.

\begin{claim}
For each $n$, we have
$$\frac{1}{n!}\sum_{\s\in S_n}\prod_{j=1}^n(1+A q^{-j/2})^{X_j(\s)} = \sum_{i=0}^n\mul{A}{i}q^{-i/2}.$$
Recall that $\mul{m}{n}:={{m+n-1}\choose n}$ is the number of ways to choose $n$ elements from a set of size $m$, allowing repetition.
\end{claim}

\begin{proof}[Proof of the claim]
We will prove by induction on $n$. The case when $n=1$ is easily checked. For induction, we will use the following fact from combinatorics.
\begin{prop}[folklore]\label{cycle index}
The function 
$$Z_n:=\frac{1}{n!}\sum_{\s\in S_n}\prod_{j=1}^n a_j^{X_j(\s)},$$
 satisfies the following recurrence relation:
$Z_n=\frac{1}{n}\sum_{l=1}^n a_lZ_{n-l}.$
The polynomial $Z_n$ in the formal variables $a_j$'s is often known as the \emph{cycle index} of $S_n$.
\end{prop}

We apply the proposition by substituting the formal variable $a_j$ by $1+A q^{-j/2}$.
\begin{align*}
Z_n&:=\frac{1}{n!}\sum_{\s\in S_n}\prod_{j=1}^n(1+A q^{-j/2})^{X_j(\s)}\\
&= \frac{1}{n}\sum_{l=1}^{n}(1+A q^{-l/2})Z_{n-l}\ \ \  \ \ \ \ \ \ \ \ \ \ \ \ \ \ \ \ \ \ \ \ \ \ \text{by the proposition}\\
&=\frac{1}{n}\sum_{l=1}^{n}(1+A q^{-l/2})\sum_{i=0}^{n-l}\mul{A}{i}q^{-i/2}\\
&=\frac{1}{n}\bigg[\sum_{l=1}^{n}\sum_{i=0}^{n-l}\mul{A}{i}q^{-i/2} +A \sum_{l=1}^{n}\sum_{i=0}^{n-l}\mul{A}{i}q^{-(i+l)/2}\bigg]\\
&=\frac{1}{n}\Bigg[\sum_{k=0}^{n-1}(n-k)\mul{A}{k}q^{-k/2} + \sum_{k=1}^{n}\bigg[\sum_{l=1}^{k}\mul{A}{k-l}A\bigg]q^{-k/2}\Bigg]\\
&=\frac{1}{n}\sum_{k=0}^{n}\bigg[(n-k)\mul{A}{k} + \sum_{l=1}^{k}\mul{A}{k-l}A \bigg]q^{-k/2}\\
&=\sum_{k=0}^{n} \mul{A}{k}q^{-k/2}\ \ \ \ \ \ \ \ \ \ \ \ \ \ \ \ \ \ \ \ \ \ \ \ \ \text{since $\sum_{l=1}^{k}\mul{A}{k-l}A=\mul{A}{k}k$}
\end{align*}
This proves the claim.
\end{proof}
Lemma \ref{bound Sn} follows from the claim by
$$\E[g_n^2,S_n]\le \sum_{i=0}^n\mul{A}{i}q^{-i/2}\leq \sum_{i=0}^\infty\mul{A}{i}q^{-i/2} =  (1-q^{-1/2})^{-A}=:B.$$
\end{proof}

\begin{lemma}[\textbf{Comparison Lemma}]\label{comparison}
Suppose $V$ is a geometrically connected scheme of finite type over $\fq$ with $d:=\dim V\ge 1$.  There exists a constant $B$ which only depends on $V$ such that for any sequence $f_n:S_n\to[0,1]$ of class functions of $S_n$, we have
$$\E[f_n, \cvq]\leq B\sqrt{\E[f_n, S_n]}$$
\end{lemma}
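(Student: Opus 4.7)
The plan is to reparametrize both sides by cycle type, i.e.\ by partitions $\mu = 1^{\mu_1}2^{\mu_2}\cdots n^{\mu_n}$ of $n$, and then compare the two resulting measures on partitions via Cauchy--Schwarz, reducing everything to the bound already established in Lemma \ref{bound Sn}. On $S_n$ under the uniform measure the probability of cycle type $\mu$ is $1/\prod_j j^{\mu_j}\mu_j!$. On $\cvq$, a square-free 0-cycle with $\mu_j$ prime factors of each degree $j$ corresponds to choosing $\mu_j$ distinct closed points of degree $j$ independently in each degree, which contributes $\prod_j \binom{\mj}{\mu_j}$. First I would multiply and divide by $\prod_j j^{\mu_j}\mu_j!$ and use $\sum_j j\mu_j = n$ (so $\prod_j q^{dj\mu_j} = q^{nd}$) to rewrite
\[
\E[f_n, \cvq] \;=\; \frac{q^{nd}}{|\cvq|}\,\E[f_n \cdot g_n,\,S_n],
\]
where $g_n$ is exactly the class function from Lemma \ref{bound Sn}.

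Next I would apply Cauchy--Schwarz on $S_n$:
\[
\E[f_n g_n, S_n] \;\le\; \sqrt{\E[f_n^2, S_n]}\;\sqrt{\E[g_n^2, S_n]}.
\]
Since $f_n$ takes values in $[0,1]$, we have $f_n^2 \le f_n$ pointwise, so $\E[f_n^2, S_n]\le\E[f_n, S_n]$; and Lemma \ref{bound Sn} bounds $\E[g_n^2, S_n]$ by an absolute constant $B$ depending only on $V$. It remains only to absorb the prefactor $q^{nd}/|\cvq|$ into the constant, which follows directly from Lemma \ref{sf}: since $|\cvq|/q^{nd}$ converges to the strictly positive limit $\zod/Z(V,q^{-2d})$, the ratio $q^{nd}/|\cvq|$ is uniformly bounded in $n$ (the finitely many small values of $n$ are absorbed into the constant). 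Combining the three ingredients gives $\E[f_n, \cvq] \le B'\sqrt{\E[f_n, S_n]}$ with $B'$ depending only on $V$.

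I do not expect a serious obstacle here: the genuine analytic work has already been done in Lemma \ref{bound Sn} via the cycle-index recursion combined with the Weil-type estimate on $\mj$. The Comparison Lemma itself is essentially a Cauchy--Schwarz repackaging of that moment bound, set up so that arbitrary bounded statistics on permutations can be transported to statistics on square-free 0-cycles. The only point requiring mild care is to match the combinatorial weights on the two sides correctly so that the class function that appears on $S_n$ after the change of measure is literally $g_n$ and not some cousin, which is the reason $g_n$ was defined with the particular normalization $\binom{\mj}{\mu_j}\,j^{\mu_j}\mu_j!/q^{dj\mu_j}$ in the first place.
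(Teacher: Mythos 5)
Your proof is correct and takes essentially the same route as the paper: reparametrize both expectations by cycle type, introduce the weight $g_n$ to turn the $\cvq$-average into $\tfrac{q^{nd}}{|\cvq|}\E[f_n g_n, S_n]$, then apply Cauchy--Schwarz together with Lemma \ref{bound Sn}, the bound $f_n^2 \le f_n$, and Lemma \ref{sf} to absorb the prefactor. No gaps.
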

\begin{proof}
\begin{align}
\nonumber\E[f_n, \cvq]&=\frac{1}{|\cvq|}\sum_{C\in \cvq}f_n(C)\\
\label{ex} &=\frac{1}{|\cvq|}\sum_{\mu\vdash n}\bigg|\Big\{C\in\cvq : \mu_C=\mu\Big\}\bigg|f_n(\mu)
\end{align}
See Remark \ref{notation} for the definition of $\mu_C$. For any partition $\mu=1^{\mu_1}2^{\mu_2}\cdots n^{\mu_n}$ of $n$, notice that
\begin{align*}
\bigg|\Big\{C\in\cvq : \mu_C=\mu\Big\}\bigg|&=\prod_{j=1}^n {\mj\choose \mu_j}\\
\frac{1}{n!}\bigg|\Big\{\s\in S_n : \mu_\s = \mu\Big\}\bigg| & = \bigg(\prod_{j=1}^n j^{\mu_j}\mu_j!\bigg)^{-1}=:z_\mu^{-1}
\end{align*}
Then (\ref{ex}) gives:
\begin{align}
\nonumber\E[f_n,\cvq]&=\frac{q^{dn}}{|\cvq|}\sum_{\mu\vdash n}\bigg[\prod_{j=1}^n {\mj\choose \mu_j}\frac{j^{\mu_j}\mu_j!}{q^{dj\mu_j}}\bigg]\frac{f_n(\mu)}{z_\mu}\\
\label{ex2}&=\frac{q^{dn}}{|\cvq|}\sum_{\mu\vdash n}g_n(\mu)\frac{f_n(\mu)}{z_\mu}&\text{$g_n$ was defined in Lemma \ref{bound Sn}}
\end{align}
By Lemma \ref{sf}, there is a constant $A$ depending only on $V$ such that 
$$\frac{q^{dn}}{|\cvq|}\leq A, \ \ \ \ \ \ \ \forall n.$$
Thus (\ref{ex2}) gives 
\begin{align*}
\E[f_n,\cvq]&\leq A\cdot\sum_{\mu\vdash n}\frac{g_n(\mu)f_n(\mu)}{z_\mu}=A\cdot \frac{1}{n!}\sum_{\s\in S_n}g_n(\s)f_n(\s)\\
&= A\cdot\E[g_n\cdot f_n,S_n]\\
&\leq A\cdot\E[g_n^2,S_n]^{1/2}\cdot\E[f_n^2,S_n]^{1/2}&\text{by Cauchy-Schwarz inequality}\\
&\leq B\cdot\E[f_n^2,S_n]^{1/2}&\text{for another constant $B$ by Lemma \ref{bound Sn}}\\
&\leq B\cdot\E[f_n,S_n]^{1/2}&\text{since $f_n\leq 1$}.
\end{align*}
Lemma \ref{comparison} is proved. 
\end{proof}
\begin{rmk}[\textbf{Comparing measures on $S_n$}]
The set of conjugacy classes in $S_n$ admits a standard probability measure, namely, the uniform measure on $S_n$. 
 Recall in (\ref{pushforward}) we defined the following map for each $n$:
\begin{align*}
\psi_n:\sq&\to \{\text{conjugacy classes in $S_n$}\},
\end{align*}
by $C\mapsto \s_C$.
Thus, one can get nonstandard measures on the set of conjugacy classes of $S_n$ by the pushforward of the uniform measure on $\sq$, or on $\cq\subset \sq$, via $\psi_n$. The proof of Lemma \ref{comparison} is a direct comparison of these measures. In the case when $V=\A^1$, the pushforward of the uniform measure on $\conf^n\A^1(\fq)$ via $\psi_n$ is precisely what Hyde-Lagarias \cite{HL} called \emph{the $q$-splitting measure} on the set of conjugacy classes of $S_n$. 
\end{rmk}

\noindent\textbf{Step 2: Approximate the left hand side of (\ref{bound}) in Theorem \ref{Poisson}.}
We now introduce the following  function which will approximate the left hand side of (\ref{bound}) in Theorem \ref{Poisson}: for $M$ and $N$ with $0<M<N<n$, define
$$\mu_{r,L}(C):= \frac{1}{\log(N)-\log(M)}\cdot \ld\bigg(\big\{t\in[\log M,\log N] : \Big|\phi(C)\cap [t,t+L]\Big|=k\big\}\bigg)$$
where again $\ld$ denotes the Lebesgue measure. 
We point out that $\mu_{r,L}$ also depends on $M,N$, even though we surpress this in our notation. 
Our goal in this step is to prove Proposition \ref{sym est} below, which bounds the expected value of $\mm(C)$ for $C\in \sq$ in certain range of related constants.

\begin{prop}\label{conf est}
There exists a constant $B$ depending only on $V$ such that for any $N, M$ satisfying $M\leq\sqrt{n}\le N\le n$,  for $m:=(10\log\log N)/(\log\log\log N)^2$, and for any $r,L$ with $r\le m/10$, and $(\log\log N)/M<L\le m/10$,
$$\frac{1}{|\cvq|}\sum_{C\in\cq}|\mu_{r,L}(C)-\ee|\leq B\cdot \bigg(\ee\frac{1}{2^{m}}\bigg)^{1/2}.$$

\end{prop}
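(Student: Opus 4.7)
The plan is to identify $\mu_{r,L}(C)$ on $\cq$ with a class function on $S_n$, apply the Comparison Lemma (Lemma \ref{comparison}) to reduce the problem to a statement about random permutations, and then invoke Granville's quantitative Poisson theorem from \cite{G3} for cycle lengths of permutations. The two observations driving this plan are (i) that the Comparison Lemma already encapsulates the geometric input (i.e.\ the Weil conjectures for $\conf^n V$) needed to transfer statistical results from $S_n$ to $\cq$, and (ii) that for $C \in \cq$ the cycle lengths of $\sigma_C$ defined in (\ref{s_C}) are exactly the degrees of the distinct prime factors of $C$, so $\phi(C)$ in (\ref{phi}) coincides with $\{\log c : c \text{ a cycle length of } \sigma_C\}$ and therefore $\mu_{r,L}(C) = \mu_{r,L}(\sigma_C)$ once $\mu_{r,L}$ is extended to permutations by the obvious formula.

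Concretely, I would set
$$f_n(\sigma) := \bigg|\mu_{r,L}(\sigma) - \frac{e^{-L}L^r}{r!}\bigg|,$$
which is a class function on $S_n$ with values in $[0,1]$. Lemma \ref{comparison} then gives
$$\frac{1}{|\cq|}\sum_{C \in \cq} f_n(C) \;=\; \E[f_n,\cq] \;\le\; B\sqrt{\E[f_n,S_n]}$$
for a constant $B$ depending only on $V$. The task therefore reduces to bounding $\E[f_n,S_n]$ by $C \cdot (e^{-L}L^r/r!)\cdot 2^{-m}$, so that after taking square roots we obtain the claimed inequality with $B\sqrt{C}$ in place of $B$.

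For this remaining bound I would appeal to a quantitative form of Granville's Theorem 1 in \cite{G3}. In the parameter range prescribed by the hypothesis, namely $M \le \sqrt{n} \le N \le n$, $r \le m/10$, and $(\log\log N)/M < L \le m/10$, Granville's second-moment analysis of the Poisson approximation for sets of cycle lengths of a uniform random permutation yields precisely the $L^1$ estimate
$$\E[f_n,S_n] \;\le\; C \cdot \frac{e^{-L}L^r}{r!}\cdot 2^{-m}.$$
Substituting into the comparison bound completes the proof.

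The main obstacle is Step 3: Granville's theorem in \cite{G3} is phrased as an ``almost all $\sigma$'' statement, so one must verify that the quantitative $L^1$ bound above—with the correct Poisson weight $e^{-L}L^r/r!$ on the right-hand side rather than a cruder factor like $1$—genuinely falls out of his second-moment argument, and that the admissible range of $m$, $r$, $L$ matches the constraints stated in Proposition \ref{conf est}. The constraints $M \le \sqrt{n}$, $r \le m/10$, $L > (\log\log N)/M$, and $L \le m/10$ are exactly the bookkeeping conditions one needs for the Poisson second moment over permutations to be accurate at the required scale, and confirming this alignment is the most technical part of executing the plan.
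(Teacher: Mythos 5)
Your proposal is correct and follows the same route as the paper: view $f_n = |\mu_{r,L} - e^{-L}L^r/r!|$ as a $[0,1]$-valued class function on $S_n$, apply Lemma \ref{comparison} to reduce the $\cq$-average to $B\sqrt{\E[f_n,S_n]}$, and then use Granville's $L^1$ estimate for permutations. The "main obstacle" you flag is resolved by citing equation (4.1) of \cite{G3} (as the paper does), which is exactly the quantitative $L^1$ bound $\E[f_n,S_n] \le (e^{-L}L^r/r!)2^{-m}$ in the required parameter range, rather than the "almost all" phrasing of the named theorem.
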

\begin{proof}
Granville proved in equation (4.1) of \cite{G3} that for any $N, M, m,r,L$ satisfying the assumptions of this proposition,
$$\frac{1}{|S_n|}\sum_{\s\in S_n}|\mu_{r,L}(\s)-\ee|\leq  \ee\frac{1}{2^{m}}.$$
Proposition \ref{conf est} follows by applying  Lemma \ref{comparison} to Granville's estimate.
\end{proof}

\begin{prop}
\label{sym est}
There exists a constant $B$ depending only on $V$ such that for all large enough $n$, for every $\ep>0$, and every $M,N$ satisfying $0\le M \leq\sqrt{n^{1-\ep}}$ and $\sqrt n\le N\le n^{1-\ep}$, let $m:=(10\log\log N)/(\log\log\log N)^2$, for every $r, L$ satisfying $r\le m/10$, and $(\log\log N)/M<L\le m/10$, we have
$$\frac{1}{|\sq|}\sum_{C\in\sq}\bigg|\mu_{r,L}(C)-\ee\bigg|\leq B\cdot \bigg(\ee\frac{1}{2^{m}}\bigg)^{1/2}.$$
\end{prop}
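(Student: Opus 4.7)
The plan is to reduce Proposition \ref{sym est} to Proposition \ref{conf est} via the canonical squarefree/squareful decomposition of 0-cycles. Every $C \in \sq$ decomposes uniquely as $C = A + 2B$ with $A$ squarefree and $B$ an arbitrary 0-cycle: writing $C = \sum n_i P_i$, take $A = \sum_{n_i\text{ odd}} P_i$ and $B = \sum \lfloor n_i/2\rfloor P_i$. This yields a bijection
$$\sq \;\longleftrightarrow\; \coprod_{a+2b=n}\conf^a V(\fq) \times \mathrm{Sym}^b V(\fq),$$
compatible with the factorization $\zt = [\zt/Z(V,t^2)]\cdot Z(V,t^2)$. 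Writing $\mm(C) - \ee = [\mm(C) - \mm(A)] + [\mm(A) - \ee]$, it suffices to bound each piece in $L^1(\sq)$.

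For the first piece, observe that $\mathrm{supp}(C) = \mathrm{supp}(A) \cup \mathrm{supp}(B)$, so the symmetric difference of the level sets $\{t : |\phi(C) \cap [t,t+L]| = r\}$ and $\{t : |\phi(A) \cap [t,t+L]| = r\}$ lies in the set of $t$ for which the window $[t,t+L]$ meets $\log\deg P$ for some $P \in \mathrm{supp}(B)\setminus\mathrm{supp}(A)$, a set of Lebesgue measure $\leq |\mathrm{supp}(B)|\cdot L \leq (\deg B)\cdot L$. Combined with the lower bound $\log N - \log M \geq (\epsilon/2)\log n$ (from $\sqrt{n}\leq N$ and $M\leq\sqrt{n^{1-\epsilon}}$) and the estimate $\E[\deg B;\sq] = O_V(1)$ -- which follows from a generating-function computation, since $\sum_n \bigl(\sum_{C\in\sq} \deg B\bigr) t^n = t^2\, \zt\, Z'(V,t^2)/Z(V,t^2)$ still has its dominant simple pole at $t = q^{-d}$ by Theorem \ref{wc} -- this gives $\E[|\mm(C) - \mm(A)|;\sq] = O_V(L/\log n) = O_V(m/\log n)$.

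For the second piece, I would condition on $b = \deg B$ and apply Proposition \ref{conf est} to $\conf^a V(\fq)$ with $a = n - 2b$, using the same parameters $M, N, m, r, L$. The hypotheses of Proposition \ref{conf est} are inherited: $M \leq \sqrt{n^{1-\epsilon}} \leq \sqrt a$, $\sqrt a \leq \sqrt n \leq N$, and $N \leq n^{1-\epsilon} \leq a$ whenever $b \leq b^* := (n - n^{1-\epsilon})/2$. For $b > b^*$, the total $\sq$-mass is $O_V(q^{-b^* d}) = O_V(q^{-nd/2})$ by Lemmas \ref{es all} and \ref{sf}, and the trivial estimate $|\mm(A) - \ee| \leq 1$ contributes a term far smaller than $(\ee/2^m)^{1/2}$. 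Summing over $b$ yields $\E[|\mm(A) - \ee|;\sq] \leq B'(\ee/2^m)^{1/2}$.

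The main obstacle is reconciling the contamination error $O_V(m/\log n)$ from the first piece against the target $(\ee/2^m)^{1/2}$ across the \emph{entire} allowed range of parameters. For $\ee$ not too small, the choice $m \ll \log\log n$ ensures $m/\log n \ll 2^{-m/2} \leq (\ee/2^m)^{1/2}$, so the argument closes directly. In the delicate regime where $\ee$ is very small (essentially when $r$ is large compared to $L$), the uniform bound $\lambda(E_C \Delta E_A) \leq (\deg B)\cdot L$ is too crude and must be sharpened: the symmetric difference is in fact nonempty only where $|\phi(A)\cap[t,t+L]|$ additionally lies in the narrow range $\{r-\deg B, \ldots, r\}$, and the Lebesgue measure of this range is itself controlled by the Poisson weights $e^{-L}L^{r-j}/(r-j)!$ via Proposition \ref{conf est}. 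Intersecting with this constraint and re-averaging over $\sq$ using $\E[\deg B;\sq] = O_V(1)$ should recover the missing $\ee^{1/2}$ factor and complete the proof.
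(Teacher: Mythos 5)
The decomposition $C=A+2B$ and the reduction of the ``main'' term to Proposition~\ref{conf est} both match the paper's argument, including the cutoff $b^*=(n-n^{1-\ep})/2$ and the tail estimate via Lemmas~\ref{es all} and \ref{sf}. The gap is in how you control the ``contamination'' term $\E\big[|\mm(C)-\mm(A)|;\sq\big]$, and you correctly flag it yourself: the bound $(\deg B)\cdot L$ on the Lebesgue measure of the symmetric difference of level sets, after averaging, gives $O\!\left(L/\log(N/M)\right)$, which is off by a factor of $L$ (up to $m/10$) from what is needed when $L$ is not small and $\ee$ is small, and your proposed repair --- intersecting with the constraint $|\phi(A)\cap[t,t+L]|\in\{r-\deg B,\dots,r\}$ and invoking Poisson weights --- is only sketched, not executed, and is considerably more delicate than what's required.

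The observation you are missing is the paper's Lemma~\ref{add}: since $\deg D=j$, \emph{every} prime appearing in $D$ has degree at most $j$, hence $\phi(D)\subset[0,\log j]$, so $\phi(C+D)$ and $\phi(C)$ coincide on all of $(\log j,\infty)$. Therefore the two level sets can differ only for $t\le\log j$, giving the bound
\[
\big|\mm(C+2D)-\mm(C)\big|\le\frac{\log(2j)}{\log(N/M)},
\]
which is independent of $L$ and $r$. Summing $\log(2j)\,q^{-jd}$ over $j$ converges to a constant, so the contamination is $O\!\left(1/\log(N/M)\right)$ without the extra factor of $L$. This is what makes the comparison with $(\ee/2^m)^{1/2}$ close throughout the stated parameter range (see the paper's inequality (\ref{sq3})). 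In other words, the crucial input is not a count of how many primes $B$ contributes (your $|\mathrm{supp}(B)|\cdot L$ bound), but that they are all confined to the fixed short interval $[0,\log\deg D]$ on the $\log\deg$ axis; once you see that, no Poisson-weight refinement is needed. Everything else in your write-up --- the bijection with $\coprod_a\conf^a V\times\sym^{(n-a)/2}V$, the tail bound $O(q^{-b^*d})$, and the use of $\E[\deg B;\sq]=O_V(1)$ --- is sound and in line with the paper.
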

\begin{proof}[Proof of Proposition \ref{sym est}]
Every 0-cycle can be written uniquely as a sum $C+2D$ where $C$ is square-free. This gives the following partition of $\sq$:
$$\sq = \bigcup_{j=0}^{\lfloor{n/2}\rfloor} \conf^{n-2j}V(\fq)\times \sym^{j}V(\fq)$$
Therefore, 
\begin{align}
\nonumber&\frac{1}{|\sq|}\sum_{C\in\sq}\bigg|\mu_{r,L}(C)-\ee\bigg|\\
\label{sq1}&=\frac{1}{|\sq|}\sum_{j=0}^{\lfloor{n/2}\rfloor}\sum_{\substack{C\in\conf^{n-2j}V(\fq) \\ D\in\sym^{j}V(\fq)}}\bigg|\mu_{r,L}(C+2D)-\ee\bigg|&
\end{align}

\begin{lemma}
\label{add}
For any $C\in \sq$ and any $D\in \sym^jV(\fq)$, we have
$$|\mm(C+D)-\mm(C)|\leq \frac{\log j}{\log (N/M)}.$$
\end{lemma}
\begin{proof}[Proof of Lemma \ref{add}.]
$$\mm(C+D) = \frac{1}{\log(N/M)}\ld\bigg(\{t\in [\log M,\log N]: \phi(C+D)\cap [t,t+L]=r\}\bigg)$$
Notice that $\phi(C+D)=\phi(C)\cup\phi(D)$, where $\phi(D)\subset [0,\log j]$. Therefore, $\phi(C+D)$ is the same as $\phi(C)$ except on the small interval $[0,\log j]$. Thus, we have
$$\Bigg|\ld\bigg(\Big\{t\in [\log M,\log N]: \Big|\phi(C+D)\cap [t,t+L]\Big|=r\Big\}\bigg)-\ld\bigg(\Big\{t\in [\log M,\log N]: \Big|\phi(C)\cap [t,t+L]\Big|=r\Big\}\bigg)\Bigg|\leq \log j$$
which gives that 
$$|\mm(C+D)-\mm(C)|\leq \frac{\log j}{\log (N/M)}.$$
\end{proof}
By Lemmas \ref{sf} and \ref{es all}, we know there exists a constant $A$ such that for all $n,j$
$$\frac{|\conf^{n-2j}V(\fq)\times\sym^{j}V(\fq)|}{|\sq|}\leq Aq^{-jd}.$$
Thus, (\ref{sq1}) becomes
\begin{align}
\nonumber&\frac{1}{|\sq|}\sum_{C\in\sq}\bigg|\mu_{r,L}(C)-\ee\bigg|\\
\nonumber&\le A\cdot \sum_{j=0}^{\lfloor{n/2}\rfloor}\Bigg[\frac{1}{|\conf^{n-2j}V(\fq)\times\sym^{j}V(\fq)|}\sum_{\substack{C\in\conf^{n-2j}V(\fq) \\ D\in\sym^{j}V(\fq)}}\bigg|\mu_{r,L}(C+2D)-\ee\bigg|\Bigg]q^{-jd}
\end{align}
Applying Lemma \ref{add} and triangle inequality, we obtain:
\begin{align}
\nonumber&\le A\cdot \sum_{j=0}^{\lfloor{n/2}\rfloor}\Bigg[\frac{1}{|\conf^{n-2j}V(\fq)\times\sym^{j}V(\fq)|}\sum_{\substack{C\in\conf^{n-2j}V(\fq) \\ D\in\sym^{j}V(\fq)}}\bigg|\mu_{r,L}(C)-\ee\bigg|+\frac{\log 2j}{\log(N/M)}\Bigg]q^{-jd}\\
&=\frac{A}{\log(N/M)}\sum_{j=0}^{\lfloor{n/2}\rfloor}\log (2j)q^{-jd}+A\cdot \sum_{j=0}^{\lfloor{n/2}\rfloor}\Bigg[\frac{1}{|\conf^{n-2j}V(\fq)|}\sum_{{C\in\conf^{n-2j}V(\fq) }}\bigg|\mu_{r,L}(C)-\ee\bigg|\Bigg]q^{-jd}\label{sq2}
\end{align}

For the first summand in (\ref{sq2}), we know that the infinite series $\sum_{j=0}^\infty \log(2j)q^{-jd}$ converges by the ratio test. Thus, there exists a constant $B$ such that
\begin{equation}
\label{sq3}
\frac{A}{\log(N/M)}\sum_{j=0}^{\lfloor{n/2}\rfloor}\log (2j)q^{-jd}\leq \frac{B}{\log(N/M)}\ll\bigg(\ee\frac{1}{2^{m}}\bigg)^{1/2}
\end{equation}
where the last equality holds in the prescribed ranges of $M,N,L,r,m$.

For the second summand in (\ref{sq2}), we divide the sum over $j$ into two parts: when $j\leq (n-n^{1-\ep})/2$, and when $j> (n-n^{1-\ep})/2$. When $j\leq (n-n^{1-\ep})/2$, Proposition \ref{conf est} holds for $\conf^{n-2j}V(\fq)$, 
 and thus there is a constant $A$ depending only on $V$ such that
\begin{align*}
\frac{1}{|\conf^{n-2j}V(\fq)|}\sum_{{C\in\conf^{n-2j}V(\fq) }}\bigg|\mu_{r,L}(C)-\ee\bigg|&\le A\cdot \bigg(\ee\frac{1}{2^{m}}\bigg)^{1/2}.
\end{align*}
Thus, we have
\begin{align} 
\nonumber\sum_{j=0}^{\lfloor{n/2}\rfloor}&\Bigg[\frac{1}{|\conf^{n-2j}V(\fq)|}\sum_{{C\in\conf^{n-2j}V(\fq) }}\bigg|\mu_{r,L}(C)-\ee\bigg|\Bigg]q^{-jd}\\
\nonumber&=\sum_{j\leq (n-n^{1-\ep})/2}\Bigg[\frac{1}{|\conf^{n-2j}V(\fq)|}\sum_{{C\in\conf^{n-2j}V(\fq) }}\bigg|\mu_{r,L}(C)-\ee\bigg|\Bigg]q^{-jd}\\
\nonumber&+\sum_{j> (n-n^{1-\ep})/2}\Bigg[\frac{1}{|\conf^{n-2j}V(\fq)|}\sum_{{C\in\conf^{n-2j}V(\fq) }}\bigg|\mu_{r,L}(C)-\ee\bigg|\Bigg]q^{-jd}\\
\nonumber&\le \sum_{j\leq (n-n^{1-\ep})/2}  A\cdot \bigg(\ee\frac{1}{2^{m}}\bigg)^{1/2}q^{-jd}+\sum_{j> (n-n^{1-\ep})/2}q^{-jd}\\
&\nonumber\le A^\prime\cdot \bigg(\ee\frac{1}{2^{m}}\bigg)^{1/2} +A^{\prime\prime}\cdot q^{-d(n-n^{1-\ep})/2}\ \ \  \ \ \ \ \ \text{for some constants $A^\prime$ and $A^{\prime\prime}$}\\
\label{sq4}&\le  A^{\prime\prime\prime}\cdot \bigg(\ee\frac{1}{2^{m}}\bigg)^{1/2}\ \ \ \ \ \ \ \ \ \ \ \ \ \ \  \ \ \ \ \ \ \ \ \ \ \ \ \ \ \text{for some constant $A^{\prime\prime\prime}$}
\end{align}
The last inequality comes from the observation that $q^{-d(n-n^{1-\ep})/2}< q^{-\ep n}\ll \bigg(\ee\frac{1}{2^{m}}\bigg)^{1/2}$ in the given ranges of $r,L,m$. 

Combining (\ref{sq2}), (\ref{sq3}) and (\ref{sq4}), there is a constant $B$ such that
\begin{align}
&\frac{1}{|\sq|}\sum_{C\in\sq}\bigg|\mu_{r,L}(C)-\ee\bigg|\leq B\cdot \bigg(\ee\frac{1}{2^{m}}\bigg)^{1/2}.
\end{align}
\end{proof}

\noindent\textbf{Step 3: Deduce Theorem \ref{Poisson} from Proposition \ref{sym est}. }
The deduction below mimics the deduction of Theorem 1.3 from Proposition 4.2 in \cite{Rh}.

We apply Proposition \ref{sym est} by choosing $\ep:=(\log\log n)^2/\log n$, and $N:=n^{1-\ep}$, and $M:=n^\ep$. Let $m$ be as in Proposition \ref{sym est}. With these choices, any $r,L$ satisfying the assumptions of Theorem \ref{Poisson} will also satisfy the assumptions of Proposition \ref{sym est}.

We  construct the subset $\Sigma\subset \sq$ as follows. 
For nonnegative integers $j,r$, we set $L_j:=m^{-1}(1+2^{-m/6})^j$ and let $\Sigma_{j,r}$ contain all $C\in \sq$ such that
\begin{equation}
\label{Sigma}
\bigg|\mu_{r,L_j}(C)-e^{-L_j}\frac{L_j^r}{r!}\bigg|\ge\frac{1}{2^{m/6+1}} \bigg(e^{-L_j}\frac{L_j^r}{r!} \bigg)^{1/2}.
\end{equation}
We set $\Sigma:=\bigcup_{j,r}\Sigma_{j,r}$ for $0\le j\le 2^{m/6+1}\log m$ and $r\le m(\log m)^{-2}$. The total number of such pairs $(j,r)$ are 
$$2^{m/6+1}\log m\cdot m(\log m)^{-2}=\frac{m}{\log m} 2^{m/6+1}.$$
For each pair $j,r$, by Proposition \ref{sym est} we have
$$|\Sigma_{j,r}|=|\sq|\ O({2^{-m/3}}).$$ 
Thus, 
$$|\Sigma|= |\sq|\ O(\frac{m}{\log m}{2^{-m/6}}),$$
which is within the claimed error. 

Next, we show that all $C\in \sq\setminus \Sigma$ satisfy (\ref{bound}). For any $L\in [1/m,m/20]$,  there is some $j$ such that $L_j\leq L<L_{j+1}$, 
\begin{align*}
\mm(C)&=\sum_{i\leq r}\mu_{i,L}(C)-\sum_{i\leq r-1}\mu_{i,L}(C)\\
&\leq \sum_{i\leq r}\mu_{i,L_j}(C)-\sum_{i\leq r-1}\mu_{i,L_{j+1}}(C)&\text{since $L_j\leq L<L_{j+1}$}\\
&\leq \sum_{i\le r} \bigg[e^{-L_j}\frac{L_j^i}{i!}+ \frac{1}{2^{m/6+1}} \bigg(e^{-L_j}\frac{L_j^i}{i!} \bigg)^{1/2}\bigg]&\text{by (\ref{Sigma})}\\
&\ \ \ \ -\sum_{i\le r-1} \bigg[e^{-L_{j+1}}\frac{L_{j+1}^i}{i!}- \frac{1}{2^{m/6+1}} \bigg(e^{-L_{j+1}}\frac{L_{j+1}^i}{i!} \bigg)^{1/2}\bigg]\\
&= e^{-L_{j+1}}\frac{L_{j+1}^r}{r!}+O\bigg(\frac{m}{2^{m/6}}\underbrace{\sum_{i\leq r}\Big(e^{-L_{j+1}}\frac{L_{j+1}^i}{i!}\Big)^{1/2}}_{(*)}\bigg)
&\text{since $e^{-L_{j}}\frac{L_{j}^i}{i!}= e^{-L_{j+1}}\frac{L_{j+1}^i}{i!}\bigg(1+O(\frac{m}{2^{m/6}})\bigg)$}
\end{align*}
Notice that 
\begin{align*}
(*)&\le \sum_{i=1}^\infty(e^{-L_{j+1}}\frac{L_{j+1}^i}{i!})^{1/2}  \ \ \ \ \ \ \text{the infinite series converges by the ratio test.}
\end{align*}
Thus, we have
$$\mu_{r,L}(C)\le e^{-L}\frac{L^r}{r!}+O\bigg(\frac{m}{2^{m/6}}\bigg)\le e^{-L}\frac{L^r}{r!}\bigg[1+O\bigg(\frac{1}{2^{m/15}}\bigg)\bigg],$$
where the last inequality holds because in the given range of $r$ and $L$, we have
$m2^{-m/10}\ll e^{-L}L^r/r!$.

The lower bound $$\mm(C)\ge e^{-L}\frac{L^r}{r!}\bigg[1+O\bigg(\frac{1}{2^{m/15}}\bigg)\bigg]$$ can be obtained in a similar way by considering 
$$\mm(C)\ge  \sum_{i\leq r} \mu_{i,L_{j+1}}(C)-\sum_{i\leq r-1}\mu_{i,L_j}(C).$$

Since we chose $m$ and $y$ such that $m\le y$, combining the lower and the upper bounds, we have 
$$\mm(C)= e^{-L}\frac{L^r}{r!}\bigg[1+O\bigg(\frac{1}{2^{m/15}}\bigg)\bigg]=e^{-L}\frac{L^r}{r!}\bigg[1+O\bigg(\frac{1}{2^{y/15}}\bigg)\bigg].$$
Finally, to establish (\ref{bound}), we note that
$$\frac{1}{\log n}\ld\bigg(\Big\{t\in[0,\log n] : \Big|\phi(C)\cap [t,t+L]=k\Big|\Big\}\bigg)=\mm(C)+O(\ep).$$
The $\ep$ we have chosen is so small that $$\ep\ll \ee\frac{1}{2^{y/15}}$$
in the given ranges of $r$ and $L$.  Theorem \ref{Poisson} is thus proved. 
\end{proof}

\section{Further questions}
In Table \ref{table}, we see that integer multiplication corresponds to the formal addition of  0-cycles. Is there any operation on 0-cycles that corresponds to integer addition? Can one do additive number theory for 0-cycles? For example, how would a twin prime conjecture for 0-cycles state?

\end{document}